\newtheorem{theorem}{Theorem}[section]
\newtheorem{proposition}[theorem]{Proposition}
\newtheorem{corollary}[theorem]{Corollary}
\newtheorem{lemma}[theorem]{Lemma}
\numberwithin{equation}{section}
\newcommand{\cE}{\mathcal{E}}
\newcommand{\bi}{\mathbf{i}}
\newcommand{\br}{\mathbf{r}}
\newcommand{\bs}{\mathbf{s}}
\newcommand{\bt}{\mathbf{t}}
\newcommand{\bu}{\mathbf{u}}
\newcommand{\fc}{\mathfrak{c}}
\begin{document}

\title[Large structures within the class of summing operators]{Large structures within the class of summing operators}


\author[Albuquerque]{N. G. Albuquerque\textsuperscript{*}}
\address[N. G. Albuquerque]{Departamento de Matem\'{a}tica \newline\indent
Universidade Federal da Para\'{i}ba \newline\indent
Jo\~ao Pessoa - PB \newline\indent
58.051-900 (Brazil)}
\email{nacib.albuquerque@academico.ufpb.br}

\author[Coleta]{L. Coleta\textsuperscript{**}}
\address[L. Coleta]{Departamento de Matem\'{a}tica \newline\indent
Universidade Federal da Para\'{i}ba \newline\indent
Jo\~ao Pessoa - PB \newline\indent
58.051-900 (Brazil)}
\email{lindinescoleta@gmail.com}

\thanks{\textsuperscript{*} Supported by CNPq Grant 312167/2021-01 and Grant 2019/0014 Para\'iba State Research Foundation (FAPESQ)}
\thanks{\textsuperscript{**} Supported by CAPES}

\subjclass[2020]{15A03, 46G25, 46B87, 47H60}

\keywords{Multilinear operators, Summing operators, Lineability, Spaceability}

\begin{abstract}
We investigate lineability/spaceability problems within the setting of multilinear summing operators on quasi-Banach sequence spaces. Furthermore, we deal with the contemporary geometric notions of pointwise-lineability and $(\alpha,\beta)$-lineability. Among other results, we prove that the class of multilinear operators taking values on $\ell_q \, (0<q \leq \infty)$ that are absolutely but not multiple summing is pointwise $\fc$-spaceable when non-empty. Spaceability in other classes, such as Dunford-Pettis operators and multilinear operators on general summable scalar families, is also studied.
\end{abstract}

\maketitle

\section{Introduction}

The modern terminology of \emph{lineability} and \emph{spaceability} was coined by Gurariy, Aron, and Seoane in the seminal paper \cite{Aron}: for a cardinal $\alpha$, a subset $A$ of a vector space $X$ is said to be $\alpha$-\emph{lineable} if  $A\cup\left\{  0\right\}  $ contains a $\alpha$-dimensional linear subspace of $X$. If $X$ is, in addition,  a topological vector space, then  $A$ is called $\alpha$-\emph{spaceable} if $A\cup\left\{  0\right\}  $ contains a closed  $\alpha$-dimensional linear subspace of $X$. These concepts are widely investigated in several research branches. For an overview, we refer the reader to recent works in \cite{bernal-jfa,BerPS}, and, to our knowledge, the most comprehensive panorama of the matter are given in \cite{lineability-book}, and \cite{BerPS}. The search for new lineability notions yields to several new concepts, and some recent ones can be found in \cite{favaro} and \cite{pr_pointw}.


Many authors contributed to investigating lineability aspects of certain classes of summing operators. Puglisi and Seoane proved in \cite{puglisi} that, under certain restrictions over a Banach space $E$, the set $\mathcal{L}(E,\ell_2)\backslash \Pi_1(E,\ell_2)$ of bounded linear non-absolutely summing operators is lineable. Later in \cite{Pellegrino1}, Botelho, Diniz and Pellegrino obtained the lineability of $\mathcal{K}(E;F) \setminus \Pi_p(E;F)$, the set of compact but not $p$-summing operators, with $p \geq 1$, with some conditions of reflexivity, infinite dimension, and unconditional basis on the Banach spaces $E,F$. More generally, Kitson and Timoney in \cite{KT} obtained a remarkable and general spaceability result for Fr\'echet spaces with several applications, among them, the spaceability of $\mathcal{K}(E,F)\backslash \bigcup_{1\leq p<\infty}\Pi_p(E,F)$, whenever $E$ is superreflexive and $F$ is infinite dimensional. Hernand\'ez, Ruiz, and S\'anchez took a step further and dealt with the spaceability of operators ideals: we refer to \cite{sanchez_jmaa} for the spaceability of $\mathcal{I}_1(E;F) \setminus \mathcal{I}_2(E;F)$, with $\mathcal{I}_1,\mathcal{I}_2$ operator ideals over certain Banach spaces $E,F$. Alves and Turco dealt with the spaceability of sets of $p$-compact linear maps where the domain and codomain are $\ell_r$, with $1\leq r < \infty$, and $c_0$ was investigated in depth and many results are obtained in \cite{at-jmaa}. In the context of quasi-Banach spaces, in \cite{DanielT} Tom\'az proved that $\mathcal{L}(\ell_p,\ell_p) \setminus  \bigcup_{1\leq s\leq r<\infty} \Pi_{(r,s)} (\ell_p,\ell_p)$ is maximal lineable for any $0<p<1$. In the multilinear environment, G. Ara\'ujo and D. Pellegrino established in \cite{araujo} that set $\mathcal{L}(^m\ell_p, \mathbb{K}) \setminus \Pi_{(r;s)}^{\textrm{ms}} (^m\ell_p, \mathbb{K})$ of bounded $m$-linear but not multiple summing forms is maximal spaceable, for $m\geq 2,\, p\in[2,\infty),\, 1\leq s<p*$ and $r < \frac{2ms}{s+2m-ms}$. Here, as usual, $\Pi^\textrm{as}$ and $\Pi^\textrm{ms}$ stand for the class of \emph{absolutely} and \emph{multiple} multilinear summing classes, respectively. Also in \cite{FPP} the authors investigated the lineability of non-bounded and non-absolutely summing multilinear operators.

It is well known from the classical Bohnenblust-Hille inequality and also the Defant-Voigt theorem that $\Pi_{(r;1)}^{\textrm{ms}} \left(^m E; \mathbb{K}\right) \subsetneq \mathcal{L}\left(^m E; \mathbb{K} \right) = \Pi_{(r;1)}^{\textrm{as}} \left(^m E; \mathbb{K}\right)$ for all $1 \leq r < \frac{2m}{m+1}$. Therefore $\Pi_{(r;1)}^{\textrm{as}} \left(^m E; \mathbb{K}\right) \setminus \Pi_{(r;1)}^{\textrm{ms}} \left(^m E; \mathbb{K}\right)$ is non-empty and the search for large closed vector spaces naturally arises in this setting. Similar problems to this one are answered as an application of the result we provide. A particular case we prove is that the class of multilinear operators taking values on sequence spaces that are absolutely but not multiple summing, for instance, for $E_1,\dots,E_m$ Banach spaces, $1\leq r \leq s < \infty$ and $q \in (0,\infty]$,
\[
\Pi_{(r,s)}^{\textrm{as}} (E_1,\dots,E_m;\ell_q)
\setminus
\Pi_{(r,s)}^{\textrm{ms}} (E_1,\dots,E_m;\ell_q),
\]
is $\mathfrak{c}$-spaceable when it is not empty. In fact, we prove a strong version dealing with the restrictive pointwise-lineability/spaceability notion and in the general setting of $\Lambda$-multiple summing class (see   \cite{aacnnpr-afa,bpr-collect,botelho-blocks,popa-lma} for more details).

The main goals of this paper are to contribute to the research of lineability and spaceability on the class of multilinear operators in two perspectives not often covered: we investigate the geometric concepts of pointwise and $\left(\alpha,\beta\right)$-lineability / spaceability (we discuss it with further details in Sections \ref{sec-prelim} and \ref{sec-tuples}), which are more restrictive than the classical notion; also we deal with operators taking value on quasi-Banach sequence spaces. The panorama is quite different from Banach spaces when dealing with quasi-Banach spaces, such as non-locally spaces $\ell_q$ for $0<q<1$. Thus the search for lineability/spaceability techniques in quasi-Banach spaces can be an exciting and challenging matter.

This paper is organized as follows. Section \ref{sec-prelim} presents preliminary concepts and notations used throughout the paper. In Section \ref{sec-summ-op} we obtain the main result of the paper: the pointwise $\mathfrak{c}$-spaceability of classes of $\Lambda$-summing operators (Theorem \ref{th-gen}) and some applications are presented. Section \ref{sec-dp} deals with absolutely summing operators that fail to be Dunford-Petis (or completely continuous). In Section \ref{sec-tuples} we prove that some sets of operators taking values on spaces of general $q$-summable scalar families  $\ell_q \left(I \right)$ can be $(\alpha,\text{card\,} I)$-lineable, for $I$ some infinite set and some cardinal $\alpha<\text{card\,} I$.

\section{Background and preliminaries} \label{sec-prelim}

Throughout this paper we deal with vector spaces over the scalar field $\mathbb{K}$, which can be either $\mathbb{R}$ or $\mathbb{C}$, and we write $\text{card\,} \mathbb{R} = \mathfrak{c}$ and $\text{card\,} \mathbb{N} = \aleph_0$. Banach spaces over $\mathbb{K}$ shall be denoted by capital letter (eventually with indexes) such as $E,E_{1},\dots,$ $E_{m},F,G,H$, unless stated otherwise. The topological dual and the closed unit ball of $E$ will be denoted by $E^{\prime}$ and $B_{E}$, respectively. We will denote by $\mathcal{L}\left(E_1,\dots,E_m;F\right)$ the Banach space  of bounded $m$-linear operators from $E_1 \times \cdots \times E_m$ to $F$ endowed with the usual sup norm.

A \emph{quasi-norm} on a vector space $X$ is a non-negative real-valued function $\|\cdot\|:X \to [0,\infty)$ satisfying for all $x,y \in X$ and $\lambda \in \mathbb{K}$ the following properties: $\|x\|=0$ only if $x=0$; $\|\lambda x\|=|\lambda|\|x\|$; and $\|x+y\|\leq C(\|x\|+\|y\|)$ with $C\geq 1$ an universal constant. If, in addition, the map $\|\cdot\|$ is \emph{$p$-subadditive} for some $0< p \leq 1$, i.e., it satisfies $\|x+y\|^p\leq \|x\|^p+\|y\|^p$ for all $x,y \in X$, $\|\cdot\|$ is called a \textit{$p$-norm}. Clearly we have a norm when $C=1$ or $p=1$. A quasi-norm defines a metrizable vector topology on $X$ whose base of neighborhoods of the origin is given by sets of the form $\{x \in X : \|x\|<1/n\},\, n \in \mathbb{N}$. $X$ is called a \emph{quasi-Banach} space when it is complete for this metric. A $p$-subadditive quasi-norm induces a metric topology on $X$ given by $d(x,y) := \|x-y\|^p$. A quasi-Banach space with an associated $p$-norm is also called a \emph{$p$-Banach space}. A deep result known as the \emph{Aoki-Rolewicz} theorem (see \cite{book1,pietsch-book}) guarantees that every quasi-normed space is $p$-normable for some $0<p\leq1$, i.e., the space can be endowed with an equivalent quasi-norm which is a $p$-norm. Thus we assume that a quasi-Banach space is $p$-Banach for some $0 < p \leq 1$. Many definitions and classical results in Banach spaces can be translated in a natural way to quasi-Banach ($p$-Banach) spaces, like standard results depending on Baire Category Theorem as Open Mapping Theorem. 
Nevertheless, $p$-normed spaces are not necessarily locally convex. 
Results which hold on Banach spaces and are based somehow on the local convexity (\emph{e.g.}, Hahn-Banach extension property or the Krein-Milman theorem) are no longer valid, in general, in that setting. The extension of lineability/spaceability arguments from Banach to quasi-Banach spaces is not straightforward. Thus the search for large closed infinite dimensional subspaces of $p$-Banach spaces is quite a delicate issue. Therefore, looking for lineability/spaceability techniques that also cover this environment spaces seems interesting. For more details on quasi-Banach and $p$-Banach spaces, we refer to \cite{book1}.


Given a positive integer $m$, $\bi := (i_1,\dots,i_m)$ stands for a multi-index in $\mathbb{N}^m$, and $\br := (r_1,\ldots,r_m) \in [1,+\infty]^m$ a multi-parameter. By $\ell_\br(E)$ we denote the Banach space that gathers all $E$-valued multi-matrices $\left( x_\bi \right)_{\bi \in \mathbb{N}^m} \in E^{\mathbb{N}^m}$ with finite $\ell_\br$-norm, that is, when $\br \in [1,+\infty)^m$ we have
\[
\left\| \left( x_\bi \right)_{\bi \in \mathbb{N}^m} \right\|_{\ell_\br(E)}
:=
\left( \sum_{i_1=1}^{\infty}
    \left( \cdots
      \left( \sum_{i_m=1}^{\infty} 
      \left\|  x_\bi \right\|_{E}^{r_m}
      \right)^{\frac{r_{m-1}}{r_m}} \cdots
  \right)^{\frac{r_1}{r_2}}
\right)^{\frac{1}{r_1}} < \infty.
\]
We simply write $\ell_\br = \ell_\br (\mathbb{K})$ and when no precision is needed $\| \cdot \|_{\ell_\br(E)} =  \| \cdot \|_\br$. Notice that $\ell_\br(E)$ is a quasi-Banach space when some $0<r_j<1$. Given a non-empty set of indexes $\Lambda \subset \mathbb{N}^{m}$, by $\left( x_\bi \right)_{\bi \in \Lambda}$ we mean an $E$-valued multi-matrix indexed over $\Lambda$, and its $\ell_\br$-norm can be seen as 
\[
\left\| \left( x_\bi \right)_{\bi \in \Lambda} \right\|_{\ell_\br(E)}
=
\left\| \left( x_\bi \cdot 1_\Lambda (\bi) \right)_{\bi \in \mathbb{N}^m} \right\|_{\ell_\br (E)}
\]
where $1_\Lambda$ is the characteristic function of $\Lambda$. It is worth pointing out some simple but useful monotonicity properties: $ \| \cdot \|_s \leq \| \cdot \|_r $ whenever $0<r<s<\infty$; $ \left\| \left( \alpha_\bi \right)_{\bi \in \Lambda}  \right\|_\br \leq \left\| \left( \alpha_\bi \right)_{\bi \in \Gamma} \right\|_\br$ for all scalar multi-matrix $\left( \alpha_\bi \right)_{\bi \in \mathbb{N}^m}$ and for (non-empty) indexes sets $\Lambda \subset \Gamma \subset \mathbb{N}^{m}$; $\| (\alpha_\bi)_{\bi \in \Lambda} \|_\br \leq \| (\beta_\bi)_{\bi \in \Lambda} \|_\br$ if $0 \leq \alpha_\bi \leq \beta_\bi$ for all $\bi \in \Lambda$.

A \emph{quasi-Banach standard sequence space} over a Banach space $X$ is an infinite dimensional quasi-Banach space $\mathcal{E}$ whose elements are $X$-valued sequences with the usual operations enjoying the following conditions:
\begin{enumerate}[(i)]
\item There is a constant $C>0$ such that
\[
\|x_j\|_{X}\leq C\|x\|_{\mathcal{E}},
\]
for every $x=(x_j)_{j\in\mathbb{N}}\in \mathcal{E}$ and all $j\in \mathbb{N}$.

\item If $x=(x_j)_{j\in\mathbb{N}}\in \mathcal{E}$ and $(x_{n_k})_{k\in\mathbb{N}}$ is a subsequence of $x$, then $(x_{n_k})_{k\in\mathbb{N}}\in \mathcal{E}$ and
\[
\|(x_{n_k})_{k\in\mathbb{N}}\|_{\mathcal{E}}\leq \|x\|_{\mathcal{E}}.
\]

\item If $x = (x_j)_{j\in\mathbb{N}} \in \mathcal{E}$ and $\mathbb{N}':=\{n_1<n_2<n_3<\cdots\}$ is an infinite subset of $\mathbb{N}$, then the $X-$valued sequence $y=(y_j)_{j\in\mathbb{N}}$ defined as
\begin{equation*}
y_j := 
\begin{cases}
x_i, & \mbox{ if } j=n_i,\\
0, & \mbox{ otherwise },
\end{cases} 
\end{equation*}
belongs to $\mathcal{E}$ and
$
\|y\|_{\mathcal{E}} \leq \| x \|_{\mathcal{E}}.
$
\end{enumerate}

It is plain that combining (ii) and (iii) yields $\|y\|_\cE = \|x\|_\cE$. The $n$-th coordinate of a sequence $z \in \mathcal{E} \subset X^{\mathbb{N}}$ will be denoted by $z_n$ or, when more precision is needed, by $z(n)$. Unless the contrary is explicitly established, $\mathcal{E}$ will denote a quasi-Banach standard sequence space. Usual sequence spaces are (quasi)-Banach standard sequence spaces as, for instance, the classical sequence spaces $\ell_p(E), \ell_p^w(E), \ell_p^u(E)$ with $p \in (0,\infty)$, $c_0(E),\, c (E)$, and the Lorentz space $\ell_{p,q}$. For a deeper discussion on general sequence spaces, more details and examples, we refer the reader to \cite{bf-seqspaces,dfpr_racsam2020}. The proof of the next lemma is straightforward.


\begin{lemma} \label{lemma-norm-infty}
Let $\mathbf{r} := (r_1,\dots,r_m) \in \left[1, \infty \right]^m$ and $\Lambda \subset \mathbb{N}^m$. If $\left( \alpha_{\bi} \right)_{\mathbf i \in \Lambda} \notin \ell_{\br} (\mathcal{E})$, then either
\[
\left( \left( \alpha_{\mathbf i} (j) \right)_{j \in \mathbb{N}'} \right)_{\mathbf i \in \Lambda } \notin \ell_{\mathbf r} (\mathcal{E})
\quad \text{ or } \quad
\left( \left( \alpha_{\mathbf i} (j) \right)_{j \in \mathbb{N} \setminus \mathbb{N}'} \right)_{\mathbf i \in \Lambda } \notin \ell_{\mathbf r} (\mathcal{E}),
\]
for any $\mathbb{N}' \subset \mathbb{N}$ with $ \text{card\,} \mathbb{N}' = \text{card} ( \mathbb{N} \setminus \mathbb{N}') = \aleph_0$.
\end{lemma}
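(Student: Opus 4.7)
The plan is to establish the contrapositive: assuming both families of restrictions lie in $\ell_{\br}(\mathcal{E})$, I will conclude that $(\alpha_{\bi})_{\bi \in \Lambda}$ itself belongs to $\ell_{\br}(\mathcal{E})$. Set $\beta_{\bi} := \alpha_{\bi} \cdot 1_{\mathbb{N}'}$ and $\gamma_{\bi} := \alpha_{\bi} \cdot 1_{\mathbb{N} \setminus \mathbb{N}'}$, so that $\alpha_{\bi} = \beta_{\bi} + \gamma_{\bi}$ coordinatewise. Properties (ii) and (iii) of a standard sequence space, applied in tandem to the positions indexed by $\mathbb{N}'$ and by $\mathbb{N} \setminus \mathbb{N}'$, yield that whenever $\alpha_{\bi} \in \mathcal{E}$ the truncations $\beta_{\bi}, \gamma_{\bi}$ also lie in $\mathcal{E}$ with $\|\beta_{\bi}\|_{\mathcal{E}}, \|\gamma_{\bi}\|_{\mathcal{E}} \leq \|\alpha_{\bi}\|_{\mathcal{E}}$; conversely, since $\mathcal{E}$ is a vector space, if both truncations belong to $\mathcal{E}$ then so does their sum $\alpha_{\bi}$.

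Next I would apply the quasi-norm inequality in $\mathcal{E}$ pointwise in $\bi$, obtaining $\|\alpha_{\bi}\|_{\mathcal{E}} \leq C(\|\beta_{\bi}\|_{\mathcal{E}} + \|\gamma_{\bi}\|_{\mathcal{E}})$ for every $\bi \in \Lambda$, where $C \geq 1$ is the quasi-triangle constant of $\mathcal{E}$. Because each coordinate of $\br$ lies in $[1,\infty]$, the mixed norm $\|\cdot\|_{\ell_{\br}}$ is a genuine norm on scalar multi-matrices, so by the monotonicity properties recorded in the preliminaries together with the triangle inequality,
\[
\left\| (\alpha_{\bi})_{\bi \in \Lambda} \right\|_{\ell_{\br}(\mathcal{E})} \leq C \left( \left\| (\beta_{\bi})_{\bi \in \Lambda} \right\|_{\ell_{\br}(\mathcal{E})} + \left\| (\gamma_{\bi})_{\bi \in \Lambda} \right\|_{\ell_{\br}(\mathcal{E})} \right) < \infty,
\]
contradicting the hypothesis $(\alpha_{\bi})_{\bi \in \Lambda} \notin \ell_{\br}(\mathcal{E})$.

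I do not anticipate any real obstacle in this argument: it is a two-level triangle-inequality argument, inner (in the quasi-Banach space $\mathcal{E}$) and outer (in the mixed norm $\ell_{\br}$). The only mild subtlety is to invoke properties (ii)–(iii) of standard sequence spaces to guarantee that the truncations $\beta_{\bi}$ and $\gamma_{\bi}$ are legitimate elements of $\mathcal{E}$, which is precisely what those axioms are tailored to provide. The assumption $\br \in [1,\infty]^m$ is used only to ensure that the outer mixed norm satisfies a genuine triangle inequality; had we started with a quasi-norm outer level, the same argument would proceed with an additional multiplicative constant absorbed into $C$.
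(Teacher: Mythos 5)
Your contrapositive argument is correct and is precisely the argument the paper has in mind (the authors declare the proof ``straightforward'' and omit it): properties (ii)--(iii) give that each truncation $\beta_{\bi}=\alpha_{\bi}\cdot 1_{\mathbb{N}'}$ and $\gamma_{\bi}=\alpha_{\bi}\cdot 1_{\mathbb{N}\setminus\mathbb{N}'}$ lies in $\mathcal{E}$ with the same norm as the corresponding re-indexed subsequence, the inner quasi-triangle inequality bounds $\|\alpha_{\bi}\|_{\mathcal{E}}$ by $C(\|\beta_{\bi}\|_{\mathcal{E}}+\|\gamma_{\bi}\|_{\mathcal{E}})$, and the outer mixed $\ell_{\br}$-norm (a genuine norm since $\br\in[1,\infty]^m$) together with the recorded monotonicity finishes the estimate. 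No gaps; your remark that vector-space closure of $\mathcal{E}$ handles membership of $\alpha_{\bi}$ itself covers the only point one might otherwise overlook.
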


Recently, Pellegrino and Raposo Jr. in \cite{pr_pointw} introduced the \emph{geometric} notion of pointwise-lineability. Let $V$ be a vector space and $\alpha$ be a cardinal number with $\alpha \leq \dim V$. A set $A\subset V$ is \emph{pointwise} $\alpha$-lineable if for each $x \in A$ there is a subspace $W = W_{x,\alpha}$ such that $x \in W \subset A \cup \{0\}$ and $\dim W = \alpha$. If $X$ is a topological vector space and $W$ can be chosen to be closed, $A$ is said \emph{pointwise} $\alpha$-spaceable. It is common to refer to $A$ as \emph{lineable} or \emph{spaceable} when $\alpha \geq \aleph_0$. Pointwise lineability is a more restrictive concept than classical lineability. The latter faces a technicality not often covered in usual lineability results: the subspace generated contains every initial mother vector. Also, this notion is related with $\left(\alpha,\beta\right)$-lineability (see Section \ref{sec-tuples}). For more details, we refer to \cite{pr_pointw}.

We finish this section with the following result, which will reveal itself useful and it is also closely connected to a lineability technique that dates back to \cite{pt-bbms-2009}. First we recall a basic ideal property of a quasi-Banach multi-ideal operators \( \left(\mathcal{M}, \|\cdot\|_{\mathcal{M}} \right)\): if \(S \in \mathcal{M}(E_1,\dots,E_m; F),\, u_j \in \mathcal{L}(G_j;Ej),\, j=1,\dots,m,\, \) and \(t \in \mathcal{L}(F;H)\), then \(t \circ S \circ (u_1,\dots,u_j) \in \mathcal{M}(G_1, \dots, G_m; H) \) and \( \| t \circ S \circ (u_1,\dots,u_j) \|_{\mathcal{M}} \leq \|t\| \cdot \|S\|_{\mathcal{M}} \cdot \|u_1\| \cdots \|u_m\| \) (see \cite{ideals_botelho} and references therein for more details). Also, we say that $\left( \mathbb{N}_k \right)_{k \in \mathbb{N}}$ is a \emph{countably disjoint decomposition} of the natural numbers when we can write $\mathbb{N} = \bigcup_{k \in \mathbb{N}} \mathbb{N}_k$ with each $\mathbb{N}_k := \left\{ n_1^{(k)} < n_{2}^{(k)} < \cdots \right\}$ an infinite subset of $\mathbb{N}$ and $\mathbb{N}_i \cap \mathbb{N}_j = \emptyset$ whenever $i\neq j$.

\begin{proposition}\label{prop_pw-mothervector}
Let $\mathcal{M}$ be quasi-Banach operators multi-ideals. Given a countably disjoint decomposition $\left( \mathbb{N}_k \right)_{k \in \mathbb{N}}$ of $\mathbb{N}$ and a non-trivial operator $T \in \mathcal{M} \left(E_1,\dots,E_m;\cE\right)$, we have the following.
\begin{enumerate}[(i)]
\item For each natural $k \geq 1$, the map $T_k : E_1 \times \cdots \times E_m \to  \mathcal{E}$ defined by
\begin{equation*}
T_k x \left( j \right) := 
\begin{cases}
0, & \mbox{ if } j \notin \mathbb{N}_k,\\
Tx (i), & \mbox{ if } j = n_i^{(k)} \in \mathbb{N}_k,
\end{cases} 
\end{equation*}
lies in $\mathcal{M} \left(E_1,\dots,E_m;\cE\right)$, and $\left\| T_{k} x \right\|_{\cE} =\left\| Tx \right\|_{\cE}$ for all $x\in E_1 \times \cdots \times E_m$. Moreover, $\left\{T_k : k \geq 2 \right\} \cup \{T\}$ is linearly independent.

\item For some $0<p\leq1$, such that the map $\Psi :\ell_p \to  \mathcal{M} \left(E_1,\dots,E_m;\cE\right)$ given by
\begin{equation*} 
\Psi a :=\alpha_1 T + \sum_{j=2}^{\infty} \alpha_j T_j,  \quad  a=(\alpha_j)_{j\in \mathbb{N}} \in \ell_p,
\end{equation*}
is well-defined, linear, bounded, and injective. 
\end{enumerate}
\end{proposition}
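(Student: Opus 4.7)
The plan for part (i) is to realize each $T_k$ as a composition $T_k = \iota_k \circ T$, where $\iota_k \colon \cE \to \cE$ is the spreading map sending $z = (z_j)_{j\in\mathbb{N}}$ to the sequence whose $n_i^{(k)}$-th coordinate equals $z_i$ and whose other coordinates are zero. Axioms (ii) and (iii) of a quasi-Banach standard sequence space (combined as noted in the paragraph right after those axioms) imply that $\iota_k$ is a well-defined linear isometry, so $\|T_k x\|_\cE = \|\iota_k(Tx)\|_\cE = \|Tx\|_\cE$. The multi-ideal property of $\mathcal{M}$ then yields $T_k \in \mathcal{M}(E_1,\dots,E_m;\cE)$ with $\|T_k\|_\mathcal{M} \leq \|\iota_k\|\cdot\|T\|_\mathcal{M} \leq \|T\|_\mathcal{M}$. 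For the linear independence of $\{T\}\cup\{T_k : k\geq 2\}$, I would assume a finite null combination $\alpha_1 T + \sum_{j=2}^{N}\alpha_{k_j} T_{k_j} = 0$ and pick $x$ with $Tx\neq 0$ together with $i_0$ such that $Tx(i_0)\neq 0$. Evaluating the identity coordinatewise at positions $\ell\in\mathbb{N}_k$ for $k\geq 2$ not among $\{k_2,\dots,k_N\}$ (and at $\ell\in\mathbb{N}_1$) kills every $T_{k_j}$-term and reduces the relation to $\alpha_1 Tx(\ell)=0$; varying the test vector so that $Tx(\ell)\neq 0$ for some such $\ell$ forces $\alpha_1=0$. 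Then testing the remaining identity at $\ell = n_{i_0}^{(k_j)}$ gives $\alpha_{k_j} Tx(i_0)=0$, hence each $\alpha_{k_j}=0$.

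For part (ii), the plan is first to invoke the Aoki--Rolewicz theorem to obtain $0<p\leq 1$ for which $\mathcal{M}$ admits an equivalent $p$-subadditive quasi-norm. Setting $T_1 := T$, the uniform estimate $\|T_j\|_\mathcal{M}\leq\|T\|_\mathcal{M}$ from (i) together with $p$-subadditivity gives
\[
\left\| \sum_{j=1}^{N} \alpha_j T_j \right\|_{\mathcal{M}}^{p}
\leq
\sum_{j=1}^{N} |\alpha_j|^{p}\, \|T_j\|_{\mathcal{M}}^{p}
\leq
\|T\|_{\mathcal{M}}^{p}\, \|a\|_{\ell_p}^{p},
\]
so the partial sums are Cauchy in the $p$-Banach space $\mathcal{M}$ and the series defining $\Psi a$ converges; letting $N\to\infty$ simultaneously yields well-definedness and boundedness with $\|\Psi\|\leq\|T\|_\mathcal{M}$. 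Linearity is immediate. For injectivity, if $\Psi a = 0$, then for every $x$ the sequence $\alpha_1 Tx + \sum_{j\geq 2}\alpha_j T_j x$ vanishes in $\cE$; reading this identity at coordinates $\ell = n_i^{(k)}\in\mathbb{N}_k$ (for each $k\geq 2$) yields the scalar relations $\alpha_1 Tx(n_i^{(k)}) + \alpha_k Tx(i) = 0$, and at $\ell\in\mathbb{N}_1$ gives $\alpha_1 Tx(\ell)=0$. Arguing as in (i), these force $\alpha_1=0$ and subsequently $\alpha_k=0$ for all $k\geq 2$.

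The main obstacle I anticipate is the linear-independence step in (i): one must carefully choose a test vector whose image $Tx$ is nonzero at coordinates lying outside the finite ``active'' union $\bigcup_j \mathbb{N}_{k_j}$, which is delicate when the image of $T$ happens to concentrate on a small set of coordinates. Once that obstacle is cleared, the rest, including the passage from the finite-combination linear independence to the injectivity of $\Psi$ on all of $\ell_p$, is a continuity argument relying only on the uniform operator bound on the $T_k$'s and the $p$-subadditive structure provided by Aoki--Rolewicz.
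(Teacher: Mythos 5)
Your construction of $T_k$ as $\iota_k\circ T$ with $\iota_k$ a norm-preserving spreading map, the appeal to the multi-ideal property for membership in $\mathcal{M}$, and the $p$-subadditivity/Aoki--Rolewicz argument for the convergence, boundedness and linearity of $\Psi$ all coincide with the paper's proof (your $\iota_k$ is the paper's $V_k$). The one step where you diverge is exactly the step you flagged as an obstacle, and that obstacle is genuine: to extract $\alpha_1=0$ you propose to find a coordinate $\ell$ outside the finitely many active blocks at which $Tx(\ell)\neq 0$, and such a coordinate need not exist. Worse, for an arbitrary prescribed decomposition the conclusion itself can fail, so no choice of test vectors can rescue your argument: if $Tx=\varphi(x)e_1$ for a nonzero bounded $m$-linear form $\varphi$ and $\mathbb{N}_2$ is the set of odd integers (so $n_i^{(2)}=2i-1$), then $T_2x(2i-1)=Tx(i)=\varphi(x)\delta_{i1}$, hence $T_2=T$, and neither the linear independence in (i) nor the injectivity of $\Psi$ in (ii) holds.

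The paper's way out is a normalization your proposal is missing: fix $x_0$ and $j_0$ with $Tx_0(j_0)\neq 0$ and arrange (the paper says ``for simplicity, we shall assume'') that $j_0\in\mathbb{N}_1$, the one block whose spreading operator $T_1$ is omitted from the family and replaced by $T$ itself. Since every $T_k$ with $k\geq 2$ vanishes on all coordinates belonging to $\mathbb{N}_1$, evaluating any finite combination --- or any $\ell_p$-convergent series, using axiom (i) of standard sequence spaces to pass from norm convergence to coordinatewise convergence --- at the pair $(x_0,j_0)$ isolates $\alpha_1 Tx_0(j_0)$ and yields $\alpha_1=0$ immediately, with no search for nonzero coordinates outside the active blocks; evaluating at $(x_0,n_{j_0}^{(k)})$ then isolates $\alpha_k Tx_0(j_0)$ and yields $\alpha_k=0$. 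In the applications (e.g.\ the proof of Theorem \ref{th-gen}) the decomposition is constructed after $j_0$ is chosen precisely so that $j_0\in\mathbb{N}_1$, which is the intended reading of the hypothesis. With this normalization added, your argument for both (i) and (ii) closes and matches the paper's.
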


\begin{proof}
\noindent (i) Since $T$ is a non-zero operator, there exist $x_0 \in E_1 \times \cdots \times E_m$ and $j_0 \in \mathbb{N}$ such that
$T x_0 (j_0) \neq 0,$
where $T x_0 (j_0)$ is the $j_0$-th coordinate of the sequence $Tx_0 \in \mathcal{E}$. For simplicity, we shall assume that $j_0 \in \mathbb{N}_1$. Using the notation in condition (iii) of standard sequence spaces, for each $k\in \mathbb{N}$ we take $\mathbb{N}' = \mathbb{N}_k = \left\{ n_1^{(k)} < n_{2}^{(k)} < \cdots \right\}$, and we define $V_k : \mathcal{E} \to \mathcal{E}$ by $V_k x := (y_j)_{j \in \mathbb{N}}$ for $x = (x_n)_{n\in \mathbb{N}} \in \mathcal{E}$. It is clear that $V_k$ is a bounded linear map with $\|V_k\| \leq 1$, $T_k = V_k \circ T \in \mathcal{M} \left(E_1,\dots,E_m;\cE\right)$, and $\|T_k\|_{\mathcal{M}} \leq \|T\|_{\mathcal{M}}$. Using conditions (ii) and (iii) of standard sequence spaces, and the fact that $\{T_k x\}_{k \in \mathbb{N}}$ is a family of sequences in $\mathcal{E}$ whose supports are pairwise disjoint for any $x$, it is straightforward to conclude the remaining statements.

\vspace*{3mm}

\noindent (ii) $A := \mathcal{M} \left(E_1,\dots,E_m;\cE\right) \subset \mathcal{L} \left(E_1,\dots,E_m;\cE\right)$ is a $p$-Banach for some $p \in (0,1]$. For any $a = (\alpha_j)_{j \in \mathbb{N}} \in \ell_p$,
\[
\sum_{j \geq 2} \|\alpha_j T_j\|_A^p
=\sum_{j \geq 2} |\alpha_j|^p \|T_j\|_A^p
=\sum_{j \geq 2} |\alpha_j|^p \|T\|_A^p
=\|T\|_A^p \sum_{j \geq 2}|\alpha_j|^p
<\infty.
\]
Hence the series $\sum_{j \geq 2} \alpha_j T_j$ converges in $A$ and, consequently, the map $\Psi :\ell_p \to A$ is well defined and linear. In order to simplify the notation we write $\Psi_a := \Psi a$ for $a \in \ell_p$. Now suppose that $\Psi_a =0$. Combining $T x_0 (j_0) \neq 0$ with the fact that the sequence $T_j x$ has null coordinates on $\mathbb{N} \setminus \mathbb{N}_j$, for any $x \in E_1 \times \cdots \times E_m$ and $ j\geq 2$,
\[
0 = \Psi_a x_0 (j_0)
= \alpha_1 T x_0 (j_0) + \sum_{j=2}^{\infty} \alpha_j T_j x_0 (j_0) = \alpha_1 T x_0 (j_0),
\]
implies that $\alpha_1=0$. Since $(T_j)_j$ is linearly independent, we have $\alpha_j=0$ for all $j$. Therefore $a=0$, and this concludes that $\Psi$ is injective.

\end{proof}

\section{Spaceability of multilinear summing operators} \label{sec-summ-op}

In this Section the search of large topological structures is focused in a general environment of multilinear summing operators. The concept of $\Lambda$-multiple summing (or $\Lambda$-summing) is a notion of multilinear summing operators that encompasses the classical notions of absolutely and multiple multilinear summing operators. It was independently introduced in \cite{bpr-collect,popa-lma} and recent developments in this environment can be found, e.g., in \cite{aacnnpr-afa,botelho-blocks}. More precisely, let $m \in \mathbb{N}$ be a positive integer, $\br,\, \bs \in [1,+\infty)^m$ and $\Lambda \subset \mathbb{N}^{m}$ a set of indexes. An $m$-linear operator $T: E_1 \times \cdots \times E_m \to F$ is $\Lambda$-$(\mathbf{r}; \mathbf{s})$-summing if there is a constant $C>0$ such that
\begin{equation}  \label{lambda-def}
\left\| \left( T x_\bi \right)_{\bi \in \Lambda}  \right\|_{\ell_{\br} \left( F \right)}
\leq C \prod_{k=1}^{m} \sup_{\phi_k \in B_{E_k^{\prime}}} \left(\sum_{i=1}^{N} \left| \phi(x_i^{(k)}) \right|^{s_k} \right)^{\frac{1}{s_k}}
\end{equation}
for all $N \in \mathbb{N}$ and $x_{i}^{(k)} \in E_k,\, k=1,\dots,m,\, i=1,\dots,N$. For simplicity of notation we write $Tx_\bi := T\left( x_{i_1}^{(1)},\dots, x_{i_m}^{(m)} \right)$. The class of all operators that fulfills the previous inequality is denoted by $\Pi_{(\mathbf{r};\mathbf{s})}^{\Lambda} \left( E_1,\dots,E_m;F \right)$, which is a Banach space endowed with the norm $\pi_{(\mathbf{r};\mathbf{s})}^\Lambda(T)$ taken as the infimum of the constants $C>0$ satisfying \eqref{lambda-def}. Notice that, by taking $\Lambda = \text{Diag}\left( \mathbb{N}^m \right) := \left\{ \left( n,\cdots,n \right) \in \mathbb{N}^m : n \in \mathbb{N} \right\}$ and $\Lambda = \mathbb{N}^{m}$, the $\Lambda$-summing class $\Pi^\Lambda$ recovers both \emph{absolutely} and \emph{multiple} multilinear summing classes, respectively
It is noteworthy to mention that given $\Lambda \subset \Gamma \subset \mathbb{N}^{m}$, the following inclusions and norm inequalities are easily seen (over fixed parameters $(\br;\bs)$ we omit): $\Pi^{\textrm{ms}} \subset \Pi^\Gamma \subset \Pi^\Lambda \subset \Pi^{\textrm{as}}$ and $\pi^{\textrm{ms}}(\cdot) \leq \pi^\Gamma(\cdot) \leq \pi^\Lambda(\cdot) \leq \pi^{\textrm{as}} (\cdot)$. Moreover, one can deal with the codomain $F$ as a quasi-Banach space and, with standard reasoning, $\Pi_{(\br;\bs)}^{\Lambda} \left(E_1,\dots,E_m;F\right)$ also is quasi-Banach. For more details and applications in the theory of absolutely multiple summing operators we refer the reader to \cite{botelho-blocks} and the references therein.

\vspace*{3mm}

The main result of this section reads as follows.

\begin{theorem} \label{th-gen}
Let $\br,\bs,\bt,\bu \in [1,+\infty)^m$ and $\Lambda \subset \Gamma \subset \mathbb{N}^m$ sets of indexes. Then
\[
\Pi_{(\br, \bs)}^{\Lambda}  \left( E_1,\ldots,E_m; \mathcal{E} \right) \setminus \Pi_{(\bt, \bu)}^{\Gamma} \left( E_1,\ldots,E_m; \mathcal{E} \right)
\]
is either empty or pointwise $\mathfrak c$-spaceable.
\end{theorem}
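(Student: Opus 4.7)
Given any $T \in \Pi_{(\br, \bs)}^{\Lambda}(E_1,\dots,E_m;\mathcal{E}) \setminus \Pi_{(\bt, \bu)}^{\Gamma}(E_1,\dots,E_m;\mathcal{E})$, the plan is to feed Proposition \ref{prop_pw-mothervector} a carefully chosen countably disjoint decomposition $(\mathbb{N}_k)_{k \in \mathbb{N}}$ of $\mathbb{N}$ and then take $W := \overline{\Psi(\ell_p)}$ (closure in the $\pi^\Lambda$-norm), where $\Psi(a) = \alpha_1 T + \sum_{k \geq 2} \alpha_k T_k$ is the bounded injective map supplied by the proposition. Then $T = \Psi(e_1) \in W$, $W$ is closed by construction, injectivity of $\Psi$ gives $\dim \Psi(\ell_p) = \dim \ell_p = \mathfrak{c}$, and the standard cardinality bound $\mathrm{card}\,W \leq \mathfrak{c}^{\aleph_0} = \mathfrak{c}$ for closures of cardinality-$\mathfrak{c}$ sets in metric spaces caps $\dim W$ at $\mathfrak{c}$; thus $\dim W = \mathfrak{c}$.

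The preparatory step is the choice of the decomposition. Pick $x_0, j_0$ with $Tx_0(j_0) \neq 0$, and witness vectors $(x_i^{(k)})_{i,k}$ for $T \notin \Pi_{(\bt, \bu)}^{\Gamma}$, so that $\|(T x_\bi)_{\bi \in \Gamma}\|_{\ell_{\bt}(\mathcal{E})} = \infty$ while all the weak $u_k$-summing norms are finite. Lemma \ref{lemma-norm-infty} guarantees that any bipartition of $\mathbb{N}$ into two infinite pieces leaves the restricted $\ell_\bt(\mathcal{E})$-norm of $(T x_\bi)_{\bi \in \Gamma}$ infinite on at least one piece; by a small bipartition-based construction one obtains a countably disjoint decomposition $(\mathbb{N}_k)_{k \geq 1}$ with $j_0 \in \mathbb{N}_1$ and $\|(P_{\mathbb{N}_1} T x_\bi)_{\bi \in \Gamma}\|_{\ell_{\bt}(\mathcal{E})} = \infty$, where $P_{\mathbb{N}_1}: \mathcal{E} \to \mathcal{E}$ is the coordinate-restriction map (a contraction, by combining conditions (ii)--(iii) on standard sequence spaces). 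In particular $P_{\mathbb{N}_1} \circ T \notin \Pi_{(\bt, \bu)}^{\Gamma}$; and since each $T_k = V_k \circ T$ ($k \geq 2$) admits the subsequence map $W_k: \mathcal{E} \to \mathcal{E}$ as a bounded left inverse of $V_k$ (so $T = W_k \circ T_k$), the ideal property yields $T_k \notin \Pi_{(\bt, \bu)}^{\Gamma}$ as well.

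The main obstacle is the disjointness $W \cap \Pi_{(\bt, \bu)}^{\Gamma} = \{0\}$. Take $S = \lim_n \Psi(a_n) \in \Pi_{(\bt, \bu)}^{\Gamma}$, the convergence being in $\pi^\Lambda$; since $\pi^\Lambda$ dominates the operator norm, $\Psi(a_n) x \to Sx$ in $\mathcal{E}$ for every $x$, hence coordinate-wise in $\mathbb{K}$ by property (i). Evaluating at $(x_0, j_0)$ with $j_0 \in \mathbb{N}_1$ yields $\Psi(a_n) x_0(j_0) = \alpha_1^{(n)} T x_0(j_0)$ (because $T_k x_0(j_0) = 0$ for $k \geq 2$), extracting $\alpha_1^{(n)} \to \alpha_1 \in \mathbb{K}$; evaluating at $(x_0, n_{j_0}^{(k)})$ analogously yields $\alpha_k^{(n)} \to \alpha_k$ for each $k \geq 2$. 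Passing to the limit in the identity $P_{\mathbb{N}_1} \circ \Psi(a_n) = \alpha_1^{(n)} P_{\mathbb{N}_1} \circ T$ produces $P_{\mathbb{N}_1} \circ S = \alpha_1 P_{\mathbb{N}_1} \circ T$, which lies in $\Pi_{(\bt, \bu)}^{\Gamma}$ by the ideal property applied to $S$; combined with $P_{\mathbb{N}_1} \circ T \notin \Pi_{(\bt, \bu)}^{\Gamma}$ this forces $\alpha_1 = 0$. The analogous identity $P_{\mathbb{N}_k} \circ S = \alpha_k T_k$ combined with $T_k \notin \Pi_{(\bt, \bu)}^{\Gamma}$ then forces $\alpha_k = 0$ for each $k \geq 2$, after which the coordinate formula $Sx(j) = \alpha_1 Tx(j) + \sum_{k \geq 2} \alpha_k T_k x(j)$ collapses to $Sx \equiv 0$, giving $S = 0$ as required.
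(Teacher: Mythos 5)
Your proposal is correct, and it shares the paper's skeleton exactly: the same Proposition \ref{prop_pw-mothervector}, the same use of Lemma \ref{lemma-norm-infty} to secure a block $\mathbb{N}_1$ containing $j_0$ on which the restricted $\ell_{\bt}(\mathcal{E})$-norm of $(Tz_{\bi})_{\bi\in\Gamma}$ stays infinite, the same decomposition of $\mathbb{N}\setminus\mathbb{N}_1$, and the same candidate subspace $\overline{\Psi(\ell_p)}$. Where you genuinely diverge is in how you verify $\overline{\Psi(\ell_p)}\cap\Pi^{\Gamma}_{(\bt,\bu)}=\{0\}$. The paper argues by direct norm estimates: for $\Psi_a$ (and then for a limit $S$ of $\Psi_{a^{(k)}}$'s) it bounds $\|\Psi_a z_{\bi}\|_{\mathcal{E}}$ from below by $|\alpha_1|\,\|(Tz_{\bi}(n))_{n\in\mathbb{N}_1}\|_{\mathcal{E}}$ or $|\alpha_i|\,\|Tz_{\bi}\|_{\mathcal{E}}$, splitting into the cases $\lim_k\alpha_1^{(k)}\neq 0$ and $\lim_k\alpha_1^{(k)}=0$, the latter requiring a small argument locating a nonzero coordinate of $S$ inside some block $\mathbb{N}_l$. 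You instead run a contrapositive, ideal-theoretic argument: assuming $S\in\Pi^{\Gamma}_{(\bt,\bu)}$, you compose with the contractive coordinate projections $P_{\mathbb{N}_k}$, pass to the limit in $P_{\mathbb{N}_1}\circ\Psi(a_n)=\alpha_1^{(n)}P_{\mathbb{N}_1}\circ T$ and $P_{\mathbb{N}_k}\circ\Psi(a_n)=\alpha_1^{(n)}P_{\mathbb{N}_k}\circ T+\alpha_k^{(n)}T_k$, and use the ideal property together with $P_{\mathbb{N}_1}\circ T\notin\Pi^{\Gamma}_{(\bt,\bu)}$ and $T_k\notin\Pi^{\Gamma}_{(\bt,\bu)}$ (the latter cleanly obtained from the left inverse $W_k$ of $V_k$) to force every coefficient to vanish, whence $S=0$. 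This buys you a single unified argument that treats lineability and spaceability at once and avoids the paper's case analysis on $\lim_k\alpha_1^{(k)}$; the paper's estimates, on the other hand, are more self-contained in that they lean only on monotonicity of the quasi-norms rather than on the multi-ideal structure of $\Pi^{\Gamma}_{(\bt,\bu)}$. Both routes are sound, and your cardinality bound $\mathrm{card}\,\overline{\Psi(\ell_p)}\leq\mathfrak{c}^{\aleph_0}=\mathfrak{c}$ correctly pins the dimension at exactly $\mathfrak{c}$, a point the paper leaves implicit.
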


\begin{proof} For simplicity, we write $A := \prod_{(\mathbf{r},\mathbf{s})}^{\Lambda}( E_1,\ldots,E_m; \mathcal{E})$ and $B := \prod_{(\bt, \bu)}^{\Gamma}( E_1,\ldots,E_m; \mathcal{E})$. Let us suppose there exists an operator $T \in A\setminus B$. We prove first that $A \setminus B$ is pointwise $\mathfrak{c}$-lineable.  Let us fix $x_0 \in E_1 \times \cdots \times E_m$ and $j_0 \in \mathbb{N}$ such that $T x_0 (j_0) \neq 0$. We also consider weakly summable sequences $z^{(j)} = \left(z_{n}^{(j)} \right)_{n \in \mathbb{N}} \in \ell_{u_j}^w(E_j), \, j=1,\ldots,m$, such that
\[
\left\| \left( T z_\mathbf{i} \right)_{\mathbf{i} \in \Gamma}  \right\|_{\ell_{\mathbf{t}} \left( \mathcal{E} \right)} = \infty.
\]
Recall the notation $T z_\bi := T\left( z_{i_1}^{(1)},\dots, z_{i_m}^{(m)} \right)$. Lemma \ref{lemma-norm-infty} assures that we can take $\mathbb{N}_1 \subset \mathbb{N}$ with $j_0 \in \mathbb{N}_1,\, \text{card\,} \mathbb{N}_1 = \text{card} ( \mathbb{N} \setminus \mathbb{N}_1) = \aleph_0$ and also such that 
\begin{equation} \label{N1-norm-infty}
\left\| \left( \left(T z_\mathbf{i} (n) \right)_{n \in \mathbb{N}_1} \right)_{\mathbf{i} \in \Gamma}  \right\|_{\ell_{\mathbf{t}} \left( \mathcal{E} \right)} = \infty.
\end{equation}

Now we take the following countably infinite decomposition: $\mathbb{N} \setminus \mathbb{N}_1 = \bigcup_{k \geq 2} \mathbb{N}_k$. Then Proposition \ref{prop_pw-mothervector} (i), applied with \(\mathcal{M} =  \Pi_{(\br, \bs)}^{\Lambda} \), provides a sequence of (linearly independent) operators $(T_k)_{k\in\mathbb{N}}$ with $\left\| T_{k} x \right\|_{\cE} =\left\| Tx \right\|_{\cE}$ for all $x\in E_1 \times \cdots \times E_m$. Clearly this implies $\|T_k\|_A = \|T\|_A$ for all $k$ and also $(T_k)_{k\in\mathbb{N}}$ belongs to $A \setminus B$.

In order to obtain the $\fc$-dimensional space in $A \setminus B$ that contains $T$, we use the construction and notation of Proposition \ref{prop_pw-mothervector} (ii): for some $0<p \leq 1$, the map $\Psi :\ell_p \to A$ is well-defined, bounded, linear and injective. According to this, $\Psi (\ell_p) \subset A$ is a $\fc$-dimensional space that contains $T$. We shall now prove that $\Psi (\ell_p) \setminus \{0\} \subset A \setminus B$, that is: $\Psi_a \notin B$, for any $a = (\alpha_j)_j \in \ell_p \setminus \{0\}$. Consider $z^{(j)} \in \ell_{u_j}^w(E_j),\, j=1,\ldots,m$, as in \eqref{N1-norm-infty}. If $\alpha_1 \neq 0$, using again that every image of $T_j$ is a sequence with null coordinates on $\mathbb{N} \setminus \mathbb{N}_j$ for $j \geq 2$,
\begin{align*}
\left\| \Psi_a  z_\bi \right\|_{\cE}
& =\left\| \left( \alpha_1 T z_\bi (n) + \sum_{j\geq2} \alpha_j T_j z_\bi (n) \right)_{n\in\mathbb{N}}\right\|_{\cE}\\
& \geq \left\| \left( \alpha_1 T z_\bi (n) + \sum_{j\geq2} \alpha_j T_j z_\bi (n) \right)_{n \in \mathbb{N}_1}\right\|_{\cE}\\
& =\left\| \left(\alpha_1 T z_\bi (n) \right)_{n\in\mathbb{N}_1} \right\|_{\cE}\\
& = |\alpha_1| \left\| \left( T z_\bi (n) \right)_{n \in \mathbb{N}_1}  \right\|_{\cE}.
\end{align*}
The monotonicity of the quasi-norm yields
\[
\left\| \left( \Psi_a z_\bi \right)_{\mathbf{i} \in \Gamma}  \right\|_{\ell_{\mathbf{t}} \left( \cE \right)}
\geq |\alpha_1| \left\| \left( \left(T z_\mathbf{i} (n) \right)_{n \in \mathbb{N}_1} \right)_{\mathbf{i} \in \Gamma}  \right\|_{\ell_{\bt} \left( \cE \right)} = \infty,
\]
with the last inequality being a consequence of \eqref{N1-norm-infty}, which gives $\Psi_a \notin B$. We now turn to the case $\alpha_1 = 0$, in which we proceed in a similar fashion. Let us consider $\alpha_i\neq 0$ for some $i\in \mathbb{N}$. Using the definition of the operators $(T_k)_{k \in \mathbb{N}}$ in Proposition \ref{prop_pw-mothervector} (i),
\[
\left\| \Psi_a z_\bi \right\|_{\cE}
= \left\| \left(\sum_{j\geq2} \alpha_j T_j z_\bi (n)\right)_{n \in \mathbb{N}} \right\|_{\cE}
\geq  \left\| \left( \alpha_i T_i z_\bi \left(n_k^{(i)}\right)\right)_{k \in \mathbb{N}} \right\|_{\cE}
= |\alpha_i|\left\| \left( T z_\bi \left(k\right)\right)_{k \in \mathbb{N}} \right\|_{\cE},
\]
thus
\[
\left\| \left( \Psi_a  z_\bi \right)_{\mathbf{i} \in \Gamma}  \right\|_{\ell_{\mathbf{t}} \left( \cE \right)}
\geq |\alpha_i| \left\| \left( T z_\mathbf{i} \right)_{\mathbf{i} \in \Gamma}  \right\|_{\ell_{\mathbf{t}} \left( \cE \right)} = \infty.
\]
This proves the pointwise $\fc$-lineability of $A \setminus B$.

\vspace*{3mm}

Now we turn to the pointwise $\fc$-spaceability of $A \setminus B$. The obvious candidate for the closed $\fc$-dimensional subspace of $A$ is $\overline{\Psi (\ell_p)}$. We will prove that $\overline{\Psi (\ell_p)}\setminus\{0\} \subset A\setminus B$. Let us fix $S = \lim_k \Psi_k \in \overline{\Psi (\ell_p)} \setminus \{0\}$, with $\Psi_k := \Psi_{a^{(k)}}$ and  $a^{(k)} = \left( \alpha_j^{(k)} \right)_j \in \ell_p$ for all $k \in \mathbb{N}$.

First let us show that there exists the limit $ \lim_k \alpha_1^{(k)}$. Notice that, for all $b = (\beta_n)_n \in \ell_p,\, x \in E_1 \times \cdots \times E_m$ and $j \in \mathbb{N}_1$, $\Psi_b x (j) = \beta_1 Tx(j)$, taking $x_0 \in E_1 \times \cdots \times E_m$ and $j_0 \in \mathbb{N}_1$ with $Tx_0(j_0) \neq 0$, we get
\[
S x_0 (j_0)
= \lim_{k \to \infty} \Psi_k x_0 (j_0)
=\left( \lim_{k \to \infty} \alpha_1^{(k)}\right) Tx_0 (j_0),
\]
and this guarantees the existence of the aforementioned limit.

Now let $z^{(j)} \in \ell_{u_j}^w(E_j),\, j=1,\ldots,m$, as in \eqref{N1-norm-infty}. Suppose that $\lim_k \alpha_1^{(k)} \neq 0$. Notice that, for all $x \in E_1 \times \cdots \times E_m$ and all $a \in \ell_p$,
\begin{align*}
\left\| \Psi_a  x \right\|_{\cE}
& = \left\|\left(\alpha_1 Tx(i)+\sum_{j\geq 2}\alpha_j T_jx(i)\right)_{i\in \mathbb{N}}\right\|_{\cE}\\
& \geq \left\|\left(\alpha_1 Tx(i)+\sum_{j\geq 2}\alpha_j T_jx(i)\right)_{i\in \mathbb{N}_1}\right\|_{\cE}\\
& = |\alpha_1| \left\| \left( T x (i) \right)_{i \in \mathbb{N}_1}  \right\|_{\cE}. 
\end{align*}
Then
\[
\left\| S z_\bi \right\|_{\cE}
\geq \lim_k |\alpha_1^{(k)}| \left\| \left( T z_\bi (i) \right)_{i \in \mathbb{N}_1}  \right\|_{\cE} 
\]
yields
\[
\left\| \left( S z_\bi \right)_{\mathbf{i} \in \Gamma}  \right\|_{\ell_{\mathbf{t}} \left( \cE \right)}
\geq \lim_k |\alpha_1^{(k)}| \cdot \left\| \left( T z_\mathbf{i} \right)_{\mathbf{i} \in \Gamma}  \right\|_{\ell_{\mathbf{t}} \left( \cE \right)} = \infty.
\]

Let us turn to the case $\lim_k \alpha_1^{(k)} = 0$. Since $S\neq 0$, there are  $x\in E_1 \times \cdots \times E_m$ and $i_0\in\mathbb{N}$ such that $Sx(i_0)\neq 0$. There are unique $l,t\in \mathbb{N}$ such that $i_0=n_t^{(l)}$, thus
\begin{align*}
	Sx(i_0)&=\lim_{k \to \infty} \Psi_k x (i_0)\\
	& = \lim_{k \to \infty} \left(\alpha_1^{(k)} Tx (i_0)+\sum_{j\geq 2}\alpha_j^{(k)}T_jx(i_0)\right)\\
	& = \lim_{k \to \infty}\left(\alpha_1^{(k)} Tx (n_t^{(l)})+\alpha_l^{(k)}T_lx(n_t^{(l)})\right)\\
	&  = \lim_{k \to \infty} \alpha_l^{(k)}Tx(t),
\end{align*}
and, hence $\lim_{k \to \infty} \alpha_l^{(k)}\neq 0$. Moreover,
\begin{align*}
\|Sx\|_{\cE}& =\left\|\lim_{k \to \infty}\left[\alpha_1^{(k)}Tx+\sum_{j\geq2}\alpha_j^{(k)} T_j x\right]\right\|_\cE\\
& = \lim_{k \to \infty}\left\|\sum_{j\geq2}\alpha_j^{(k)} T_j x\right\|_\cE\\
& \geq \lim_{k \to \infty}\left\|\left(\sum_{j\geq2}\alpha_j^{(k)} T_j x(i)\right)_{i\in \mathbb{N}_{l}}\right\|_\cE\\
& = \lim_{k \to \infty}\left\|\left(\alpha_{l}^{(k)} T_{l} x(i)\right)_{i\in \mathbb{N}_{l}}\right\|_\cE\\
& = \lim_{k \to \infty} \left|\alpha_{l}^{(k)}\right|\left\|\left(T_{l} x(i)\right)_{i\in \mathbb{N}_{l}}\right\|_\cE\\
& = \lim_{k \to \infty} \left|\alpha_{l}^{(k)}\right|\left\|\left(Tx(i)\right)_{i\in \mathbb{N}}\right\|_\cE\\
&= \lim_{k \to \infty} \left|\alpha_{l}^{(k)}\right| \|T x\|_\cE
\end{align*}
for all $ x \in E_1 \times \cdots \times E_m$. In particular, for such $z_\bi$ as in \eqref{N1-norm-infty},
\[
\left\| \left( S z_\bi \right)_{\mathbf{i} \in \Gamma}  \right\|_{\ell_{\mathbf{t}} \left( \cE \right)}
\geq \left( \lim_{k \to \infty} \left|\alpha_{l}^{(k)}\right| \right) \cdot
\left\| \left( T z_\mathbf{i} \right)_{\mathbf{i} \in \Gamma}  \right\|_{\ell_{\mathbf{t}} \left( \cE \right)} = \infty,
\]
this leads to $S \notin B$ and, therefore, the proof is complete.
\end{proof}

We obtain the following result proceeding in the same manner as in the proof of Theorem \ref{th-gen}. Furthermore, taking $\Lambda = \text{Diag}\left( \mathbb{N}^m \right) := \left\{ \left( n,\cdots,n \right) \in \mathbb{N}^m : n \in \mathbb{N} \right\}$ and $\Gamma = \mathbb{N}^{m}$, we return to the classical setting of absolutely and multiple summing multilinear operators.

\begin{theorem} 
Let $\Lambda \subset \mathbb{N}^m$ be a set of indexes and $\br,\bs \in [1,+\infty)^m$. Then
\[
\mathcal{L}\left( E_1, \dots, E_m; \cE \right)
\setminus 
\Pi_{(\br,\bs)}^\Lambda \left( E_1, \dots, E_m; \cE \right)
\]
is either empty or pointwise $\mathfrak c$-spaceable.
\end{theorem}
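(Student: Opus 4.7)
The plan is to mimic the proof of Theorem \ref{th-gen} verbatim, taking now $A := \mathcal{L}(E_1,\dots,E_m;\cE)$ and $B := \Pi_{(\br,\bs)}^{\Lambda}(E_1,\dots,E_m;\cE)$. The entire construction used to prove Theorem \ref{th-gen} depends on $A$ being a (quasi-)Banach operator multi-ideal, and this holds trivially for $\mathcal{L}$ itself with the operator sup-quasi-norm. Hence Proposition \ref{prop_pw-mothervector} applies with $\mathcal{M}=\mathcal{L}$, and this is the only adaptation needed.

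Assuming $A \setminus B \neq \emptyset$, fix $T \in A \setminus B$. Failure of $T$ to belong to $\Pi_{(\br,\bs)}^\Lambda$ furnishes weakly $s_j$-summable sequences $z^{(j)} \in \ell_{s_j}^w(E_j)$, $j=1,\dots,m$, with $\|(T z_{\bi})_{\bi \in \Lambda}\|_{\ell_{\br}(\cE)} = \infty$. Picking $x_0 \in E_1\times\cdots\times E_m$ and $j_0 \in \mathbb{N}$ with $Tx_0(j_0) \neq 0$, Lemma \ref{lemma-norm-infty} produces a subset $\mathbb{N}_1 \subset \mathbb{N}$, with $j_0 \in \mathbb{N}_1$ and both $\mathbb{N}_1, \mathbb{N}\setminus\mathbb{N}_1$ infinite, such that
\[
\|((T z_{\bi}(n))_{n \in \mathbb{N}_1})_{\bi \in \Lambda}\|_{\ell_{\br}(\cE)} = \infty.
\]
Decompose $\mathbb{N} \setminus \mathbb{N}_1 = \bigcup_{k \geq 2} \mathbb{N}_k$ as a countably infinite disjoint union of infinite sets and invoke Proposition \ref{prop_pw-mothervector}(i) to obtain operators $T_k \in A$ with $\|T_k x\|_{\cE} = \|T x\|_{\cE}$ for every $x$, so that in particular $\|T_k\|_A = \|T\|_A$. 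Proposition \ref{prop_pw-mothervector}(ii) then yields a bounded, linear, injective map $\Psi : \ell_p \to A$, for some $0<p\leq 1$, whose image is a $\fc$-dimensional subspace of $A$ containing $T$.

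For pointwise $\fc$-lineability I would establish $\Psi(\ell_p) \setminus \{0\} \subset A \setminus B$ via the same dichotomy as in Theorem \ref{th-gen}: if $a = (\alpha_j)_j \in \ell_p\setminus\{0\}$ has $\alpha_1 \neq 0$, restricting to coordinates in $\mathbb{N}_1$ annihilates every $T_j x$ for $j\geq 2$, leaving $\|\Psi_a z_{\bi}\|_{\cE} \geq |\alpha_1|\, \|(Tz_{\bi}(n))_{n\in \mathbb{N}_1}\|_{\cE}$; if $\alpha_1 = 0$ and $\alpha_i \neq 0$, the analogous restriction to $\mathbb{N}_i$ gives $\|\Psi_a z_{\bi}\|_{\cE} \geq |\alpha_i|\, \|T z_{\bi}\|_{\cE}$. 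Taking $\ell_{\br}$-norms over $\bi \in \Lambda$ makes the right-hand side infinite in both cases, so $\Psi_a \notin B$. For pointwise $\fc$-spaceability the candidate closed subspace is $\overline{\Psi(\ell_p)}$; given $S = \lim_k \Psi_{a^{(k)}} \in \overline{\Psi(\ell_p)}\setminus\{0\}$ with $a^{(k)}=(\alpha_j^{(k)})_j$, evaluation at $(x_0,j_0)$ shows $\alpha_1^{\infty} := \lim_k \alpha_1^{(k)}$ exists. If $\alpha_1^{\infty} \neq 0$, the $\mathbb{N}_1$-restriction argument yields $\|Sx\|_\cE \geq |\alpha_1^{\infty}|\, \|(Tx(n))_{n\in \mathbb{N}_1}\|_\cE$ for all $x$; otherwise, picking $x$ and $i_0 = n_t^{(l)}$ with $Sx(i_0)\neq 0$, one deduces $\lim_k \alpha_l^{(k)} \neq 0$ and $\|Sx\|_\cE \geq (\lim_k |\alpha_l^{(k)}|)\|Tx\|_\cE$, exactly as in the main proof. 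Feeding the $z_{\bi}$ of \eqref{N1-norm-infty} into either inequality forces $\|(S z_{\bi})_{\bi \in \Lambda}\|_{\ell_{\br}(\cE)} = \infty$, so $S\notin B$. The only point that deserves a separate check — and the closest thing to an obstacle — is the opening observation that $\mathcal{L}(E_1,\dots,E_m;\cE)$ qualifies as a quasi-Banach multi-ideal so that Proposition \ref{prop_pw-mothervector} can be invoked; beyond this, the argument is a transcription of the proof of Theorem \ref{th-gen}.
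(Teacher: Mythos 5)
Your proposal is correct and coincides with the paper's own treatment: the paper proves this theorem simply by remarking that one proceeds exactly as in the proof of Theorem \ref{th-gen}, now with $A=\mathcal{L}(E_1,\dots,E_m;\cE)$ and $B=\Pi_{(\br,\bs)}^{\Lambda}(E_1,\dots,E_m;\cE)$, which is precisely your transcription. Your one flagged checkpoint --- that $\mathcal{L}$ itself is a (quasi-)Banach multi-ideal so that Proposition \ref{prop_pw-mothervector} applies with $\mathcal{M}=\mathcal{L}$ --- is indeed the only adaptation required and it holds.
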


\begin{corollary} 
Let $q \in \left( 0,\infty \right]$ and $\br,\bs,\bt,\bu \in [1,+\infty)^m$. Then each one of the sets
\[
\mathcal{L} \left(E_1,\ldots,E_m;\ell_q \right)
\setminus
\Pi_{(\bt, \bu)}^{\textrm{ms}} \left(E_1,\ldots,E_m;\ell_q \right),
\quad 
\mathcal{L} \left(E_1,\ldots,E_m;\ell_q \right)
\setminus
\Pi_{(\bt, \bu)}^{\textrm{as}} \left(E_1,\ldots,E_m;\ell_q \right)
\]
and
\[
\Pi_{(\br, \bs)}^{\textrm{as}} \left(E_1,\ldots,E_m;\ell_q \right)
\setminus
\Pi_{(\bt, \bu)}^{\textrm{ms}} \left(E_1,\ldots,E_m;\ell_q \right)
\]
is either empty or pointwise $\mathfrak c$-spaceable.
\end{corollary}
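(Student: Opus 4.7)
The plan is to derive this corollary as a direct specialization of the previous two results (Theorem \ref{th-gen} and the unnumbered theorem immediately preceding the corollary) by making the appropriate choices of $\Lambda$ and $\Gamma$, after observing that $\ell_q$ fits the framework of a quasi-Banach standard sequence space for every $q\in(0,\infty]$.

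First, I would verify that $\cE := \ell_q$ is a quasi-Banach standard sequence space for all $q\in(0,\infty]$: condition (i) is the usual estimate $|x_j| \leq \|x\|_{\ell_q}$ (with $C=1$); conditions (ii) and (iii) are immediate since passing to a subsequence or inserting zeros at arbitrary positions does not increase the $\ell_q$-quasinorm of a scalar sequence. Completeness is classical (for $0<q<1$ one uses that $\ell_q$ is $q$-Banach). Hence $\ell_q$ can play the role of $\cE$ in both the preceding theorem and Theorem \ref{th-gen}.

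Second, I would recall the observation made in the paragraph just after the definition of the $\Lambda$-summing class: taking $\Lambda = \textrm{Diag}(\mathbb{N}^m)$ the class $\Pi^\Lambda_{(\br;\bs)}$ coincides with the absolutely summing class $\Pi^{\textrm{as}}_{(\br;\bs)}$, while taking $\Lambda = \mathbb{N}^m$ it coincides with the multiple summing class $\Pi^{\textrm{ms}}_{(\br;\bs)}$. With this in hand, each of the three sets in the corollary is an instance of the sets treated by Theorem \ref{th-gen} or by the preceding theorem:
\begin{itemize}
\item $\mathcal{L}\setminus\Pi^{\textrm{ms}}_{(\bt;\bu)}$ follows from the preceding theorem applied with $\Lambda = \mathbb{N}^m$;
\item $\mathcal{L}\setminus\Pi^{\textrm{as}}_{(\bt;\bu)}$ follows from the preceding theorem applied with $\Lambda = \textrm{Diag}(\mathbb{N}^m)$;
\item $\Pi^{\textrm{as}}_{(\br;\bs)}\setminus\Pi^{\textrm{ms}}_{(\bt;\bu)}$ follows from Theorem \ref{th-gen} applied with $\Lambda = \textrm{Diag}(\mathbb{N}^m)$ and $\Gamma = \mathbb{N}^m$ (noting the inclusion $\Lambda\subset\Gamma$ is trivially satisfied).
\end{itemize}

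There is no real obstacle; the entire content of the corollary is to advertise the three most natural instances that fall out of the general theorems. The only points that require a line of justification are the verification that $\ell_q$ (including the boundary cases $q=\infty$ and $0<q<1$) meets the axioms of a quasi-Banach standard sequence space, and the identification of $\textrm{Diag}(\mathbb{N}^m)$ and $\mathbb{N}^m$ with the absolutely and multiple summing classes respectively; both are essentially bookkeeping, so the proof is effectively a one-line reduction to the earlier statements.
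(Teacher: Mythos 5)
Your proposal is correct and matches the paper's intent exactly: the paper states this corollary without proof, relying on the remark immediately preceding the unnumbered theorem that $\Lambda = \textrm{Diag}(\mathbb{N}^m)$ and $\Gamma = \mathbb{N}^m$ recover the absolutely and multiple summing classes, so the three sets are precisely the specializations you list (with $\ell_q$ verified as a quasi-Banach standard sequence space). Your explicit check of the axioms for $\ell_q$, including $q=\infty$ and $0<q<1$, is the only bookkeeping the paper leaves implicit, and you handle it correctly.
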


\subsection{Non-absolutely summing operators on classical sequence spaces} 

We now turn our attention to linear bounded operators on the classical quasi-Banach spaces $\ell_q$ for $0<q<1$ that fail to be absolutely summing. More precisely, for $0<q<1$, we investigate large topological linear structures within
$
\mathcal{L} \left(\ell_{q},\ell_{q}\right)
\setminus
\bigcup_{1 \leq s\leq r < \infty} \Pi_{\left(r, s\right)} \left(\ell_{q}, \ell_{q} \right).
$
This matter was recently investigated in \cite{DanielT}, where the $\mathfrak{c}$-lineability of the set mentioned earlier was obtained. We take a step forward, providing that the set is $\mathfrak{c}$-spaceable. The following result generalizes \cite[Theorem 3.1]{DanielT}.

%
%


\begin{theorem} 
Let $0<q<1$. Then
\[
\mathcal{L}\left(\ell_{q};\ell_{q}\right)
 \setminus \bigcup_{1\leq s\leq r<\infty} \Pi_{\left(  r,s\right)} \left( \ell_{q};\ell_{q}\right)
\]
is $\mathfrak c$-spaceable. Moreover, the result is sharp, since it is not valid for $q\geq1$.
\end{theorem}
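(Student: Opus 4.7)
The strategy is to apply the mother-vector construction of Proposition \ref{prop_pw-mothervector} to a single operator that witnesses the failure of every $(r,s)$-summing property at once, in the spirit of the proof of Theorem \ref{th-gen}. First I would secure a nontrivial $T \in \mathcal{L}(\ell_q; \ell_q)$ with $T \notin \Pi_{(r,s)}(\ell_q; \ell_q)$ for every $(r,s) \in [1,\infty)^2$ with $s \leq r$. Such an operator is produced (explicitly or implicitly) in the $\mathfrak{c}$-lineability argument of \cite[Theorem 3.1]{DanielT}; its essential feature is that a \emph{single} bounded operator violates every $(r,s)$-summing inequality simultaneously across the whole parameter range.

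With $T$ fixed, I would choose a countably disjoint decomposition $\mathbb{N} = \bigcup_{k \geq 1} \mathbb{N}_k$ and apply Proposition \ref{prop_pw-mothervector} with $\mathcal{M} = \mathcal{L}$ and $\cE = \ell_q$. This yields the block-supported family $(T_k)_{k \in \mathbb{N}}$ and an injective bounded linear map $\Psi : \ell_p \to \mathcal{L}(\ell_q; \ell_q)$ for some $0 < p \leq 1$. The candidate closed $\mathfrak{c}$-dimensional subspace is $W := \overline{\Psi(\ell_p)}$; since $T \in W$, the construction in fact yields pointwise $\mathfrak{c}$-spaceability, which is stronger than what is claimed. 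To verify $W \setminus \{0\} \subset \mathcal{L}(\ell_q;\ell_q) \setminus \bigcup_{1 \leq s \leq r < \infty} \Pi_{(r,s)}(\ell_q; \ell_q)$, I would reproduce the closure argument from the last part of Theorem \ref{th-gen}: for any nonzero $S = \lim_k \Psi_{a^{(k)}}$, one extracts $c > 0$ (from the limit of the first nonzero coordinate of the sequences $a^{(k)}$) such that $\|S x\|_{\ell_q} \geq c \|T x\|_{\ell_q}$ for all $x \in \ell_q$. This pointwise bound is uniform in $(r,s)$, so for any test sequence demonstrating the failure of the $(r,s)$-inequality for $T$, the same sequence witnesses the failure for $S$, simultaneously across every admissible pair.

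The principal obstacle is the existence of the mother vector $T$ with this simultaneous-failure property; once that is granted, the block construction and the closure argument are structurally identical to those in Theorem \ref{th-gen}, and no new difficulty arises from the union over $(r,s)$ being uncountable, precisely because the lower bound $\|Sx\|_{\ell_q} \geq c\|Tx\|_{\ell_q}$ is parameter-free. For the sharpness claim, when $q \geq 1$ the Banach space $\ell_q$ has cotype $r_0 := \max(q, 2)$, which (via Kahane's inequality) is equivalent to $\mathrm{id}_{\ell_q} \in \Pi_{(r_0, 1)}(\ell_q; \ell_q)$. By the ideal property, every $T \in \mathcal{L}(\ell_q; \ell_q)$, written as $T = T \circ \mathrm{id}_{\ell_q}$, lies in $\Pi_{(r_0, 1)}(\ell_q; \ell_q) \subset \bigcup_{1 \leq s \leq r < \infty} \Pi_{(r,s)}(\ell_q; \ell_q)$. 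Hence the set in question is empty for $q \geq 1$, which not only prevents $\mathfrak{c}$-spaceability but also the mere non-emptiness, confirming the sharpness claim.
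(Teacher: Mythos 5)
Your overall strategy coincides with the paper's: take a single mother operator failing every $(r,s)$-summing inequality --- namely the identity on $\ell_q$, $0<q<1$, which by Maddox's theorem is never $(r,s)$-summing and is exactly the operator underlying \cite[Theorem 3.1]{DanielT} --- then run the block construction of Proposition \ref{prop_pw-mothervector} and close up via the argument of Theorem \ref{th-gen}; your sharpness argument via the cotype of $\ell_q$ for $q\geq 1$ is precisely the coincidence $\Pi_{(\max\{2,q\},1)}(\ell_q;\ell_q)=\mathcal{L}(\ell_q;\ell_q)$ that the paper invokes.

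Two points need repair, both caused by your decision to keep $T$ itself as a generator of the subspace. First, the closure argument does \emph{not} give $\|Sx\|_{\ell_q}\geq c\,\|Tx\|_{\ell_q}$ when the surviving limit coefficient is the one multiplying $T$: restricting to the coordinates in $\mathbb{N}_1$ kills the blocks $T_j$, $j\geq 2$, but it also leaves only $\bigl\|(Tx(i))_{i\in\mathbb{N}_1}\bigr\|_{\ell_q}$ on the right-hand side, so your ``parameter-free'' lower bound is against the restriction of $T$ to $\mathbb{N}_1$, not against $T$. For $T=\mathrm{id}$ this is harmless, since that restriction is an isometric copy of $\mathrm{id}_{\ell_q}$ and still fails every $(r,s)$-inequality (test with vectors supported on $\mathbb{N}_1$), but this must be said; for a general mother vector one would need $\mathbb{N}_1$ to work simultaneously for uncountably many pairs $(r,s)$, which Lemma \ref{lemma-norm-infty} does not supply. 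The paper sidesteps the issue entirely by taking an arbitrary decomposition $\mathbb{N}=\bigcup_{k\geq 1}\mathbb{N}_k$ and defining $\Psi a:=\sum_{j\geq 1}\alpha_j T_j$ with no separate $T$ term, so that every surviving coefficient produces the clean bound $\|Sx\|\geq c\,\|Tx\|$. Second, and for the same reason, your parenthetical claim that the construction ``in fact yields pointwise $\mathfrak{c}$-spaceability'' is unjustified: you obtain one closed $\mathfrak{c}$-dimensional subspace through the particular operator $\mathrm{id}$, whereas pointwise spaceability would require such a subspace through \emph{every} member of the set, i.e., running the argument with an arbitrary mother vector --- exactly where the uniform-in-$(r,s)$ choice of $\mathbb{N}_1$ becomes problematic. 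What your argument delivers, once the first point is patched, is plain $\mathfrak{c}$-spaceability, which is what the theorem asserts.
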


\begin{proof}
The starting point is a key result due to Maddox, who proved that the identity map on $\ell_p$ is never absolutely summing. More precisely, if $0<q<1$ and $1\leq s\leq r<\infty$, then the identity map $Id:\ell_{q} \to \ell_{q}$ is not $\left(r, s\right)$-absolutely summing (for more details see \cite{DanielT}). Hence we can take $T:=Id \in \mathcal{L}\left(\ell_{q};\ell_{q}\right) \setminus \bigcup_{1\leq s\leq r<\infty} \Pi_{\left(  r,s\right)} \left( \ell_{q};\ell_{q}\right)$. The argument is similar to the proof of Theorem \ref{th-gen}. We mention just the slight changes needed. In Proposition \ref{prop_pw-mothervector} we can take any countably disjoint decomposition $\left( \mathbb{N}_k \right)_{k \in \mathbb{N}}$; and the operator $\Psi$ in item (ii) must be defined as $\Psi_a := \sum_{j=1}^{\infty} \alpha_j T_j$ for $ a=(\alpha_j)_{j\in \mathbb{N}} \in \ell_p$. The last statement was observed in \cite{DanielT} as for $q \geq 1$ the coincidence $\Pi_{\max\{2,q\},1} \left(\ell_q;\ell_q\right) = \mathcal{L} \left(\ell_q;\ell_q\right)$ is well known.
\end{proof}

\section{Absolutely summing and Dunford-Pettis operators}  \label{sec-dp}

It is well known that the class of Dunford-Pettis operators plays an important role in general Banach theory and Operator Theory. An operator $T : E \to F$ is called \emph{Dunford-Pettis} (or \emph{completely continuous}) if $Tx_n$ converges to $Tx$ (in $F$), whenever $x_n$ converges weakly to $x$ in $E$. From now on the class of Dunford-Pettis operators from Banach spaces $E$ into $F$ will be denoted by $DP(E,F)$.
Observe that a compact operator must be a \emph{Dunford-Pettis operator} and, clearly, the two notions coincide when $E$ is reflexive. The interested reader can find more information and a panorama on the subject in the work \cite{diestelsurvey} and the references therein.

Bennet, in \cite{Bennet}, showed a connection between absolutely $\left(r,s\right)$-summing and Dunford-Pettis operators: for any Banach spaces $E$ and $F$, there exists an absolutely $\left(r,s\right)$-summing operator $T:E \to F$, with $1\leq r<s<\infty$, that does not satisfy the \textit{Dunford-Pettis} property. With this we have a suitable environment to investigate exotic properties such as lineability and spaceability. In fact, the next result shows that the set of absolutely $(r,s)$-summing operators with values on $\ell_q$ that fails to be Dunford-Pettis is pointwise $\fc$-spaceable.

\begin{theorem}
Let $1\leq q \leq \infty$ and $1 \leq r< s < \infty$. Then
\[
\Pi_{(r,s)}(E,\ell_q) \setminus DP(E,\ell_q)
\]
is either empty or pointwise $\mathfrak c$-spaceable.
\end{theorem}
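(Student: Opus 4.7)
The plan is to mirror the proof of Theorem \ref{th-gen}, letting the failure of the Dunford--Pettis property play the role that equation \eqref{N1-norm-infty} played there. Since $T \in \Pi_{(r,s)}(E,\ell_q) \setminus DP(E,\ell_q)$, there exist a weakly null sequence $(x_n)_n \subset E$ and $\delta>0$ with $\|T x_n\|_{\ell_q} \geq \delta$ for every $n$; the case $q=1$ is vacuous by Schur's theorem, so assume $1<q\leq \infty$. The preliminary (and crucial) step is to extract a subsequence of $(x_n)$, still denoted $(x_n)$, and choose a countably disjoint decomposition $(\mathbb{N}_k)_{k\geq 1}$ of $\mathbb N$ such that
\[
\big\|(Tx_n)|_{\mathbb N_1}\big\|_{\ell_q} \geq \delta/2 \quad \text{for all } n \in \mathbb N.
\]
For $1<q<\infty$ this is obtained by a gliding hump / Bessaga--Pe\l czy\'nski argument: $(Tx_n)$ is weakly null in $\ell_q$ with norms bounded below, so a subsequence is essentially supported on pairwise disjoint finite blocks $B_n$ of the canonical basis, and one takes $\mathbb N_1 := \bigcup_n B_n$. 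For $q=\infty$, the coordinatewise convergence $Tx_n(k)\to 0$ (for each fixed $k$) forces any $k_n$ with $|Tx_n(k_n)| \geq \delta$ to escape to infinity, so after extraction the $k_n$ are strictly increasing and one sets $\mathbb N_1 := \{k_n : n \in \mathbb N\}$. In both cases, partition $\mathbb N \setminus \mathbb N_1$ into infinitely many infinite pieces $\mathbb N_2, \mathbb N_3, \dots$.

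Next, apply Proposition \ref{prop_pw-mothervector} with $\mathcal M = \Pi_{(r,s)}$ to obtain linearly independent operators $T_2, T_3, \ldots \in \Pi_{(r,s)}(E,\ell_q)$ with $\|T_k x\|_{\ell_q} = \|Tx\|_{\ell_q}$ for all $x$, together with the injective bounded linear map
\[
\Psi : \ell_p \longrightarrow \Pi_{(r,s)}(E,\ell_q), \qquad \Psi a = \alpha_1 T + \sum_{j\geq 2}\alpha_j T_j.
\]
The candidate closed $\mathfrak c$-dimensional subspace is $W := \overline{\Psi(\ell_p)}$; it contains $T$, so it remains to check $W \setminus \{0\} \subset \Pi_{(r,s)}(E,\ell_q) \setminus DP(E,\ell_q)$.

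Fix $S \in W \setminus \{0\}$, written $S = \lim_k \Psi_{a^{(k)}}$ in operator norm. As in the proof of Theorem \ref{th-gen}, evaluating $\Psi_{a^{(k)}}$ at a pair $(x_0, j_0)$ with $j_0 \in \mathbb N_1$ and $Tx_0(j_0)\neq 0$ (available by the choice of $\mathbb N_1$) shows that $\alpha_1^*:=\lim_k \alpha_1^{(k)}$ exists; if $\alpha_1^*=0$, the nontriviality of $S$ produces $x^*\in E$ and $i_0 \in \mathbb N_l$ for some $l \geq 2$ with $Sx^*(i_0)\neq 0$, and a second coordinate trace shows $\alpha_l^*:=\lim_k \alpha_l^{(k)}$ exists and is nonzero. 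To conclude $S \notin DP(E,\ell_q)$ I evaluate on the weakly null sequence $(x_n)$: since $T_j x$ is supported in $\mathbb N_j$ for $j\geq 2$, if $\alpha_1^* \neq 0$ one has $(Sx_n)|_{\mathbb N_1}=\alpha_1^*(Tx_n)|_{\mathbb N_1}$, whence $\|Sx_n\|_{\ell_q}\geq |\alpha_1^*|\,\delta/2$; whereas if $\alpha_1^*=0$, then $(Sx_n)|_{\mathbb N_l}=\alpha_l^* T_l x_n$, whence $\|Sx_n\|_{\ell_q}\geq |\alpha_l^*|\,\|Tx_n\|_{\ell_q}\geq |\alpha_l^*|\,\delta$. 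Either way $(Sx_n)$ stays bounded away from zero while $(x_n)$ is weakly null, so $S$ is not Dunford--Pettis. The main obstacle is the coordinated choice of $(\mathbb N_k)$ together with the witness subsequence in the first paragraph: whereas Theorem \ref{th-gen} could appeal to the abstract Lemma \ref{lemma-norm-infty}, here one needs an $\ell_q$-specific argument (gliding hump for $1<q<\infty$, coordinatewise weak convergence for $q=\infty$) to secure the lower bound on $\|(Tx_n)|_{\mathbb N_1}\|_{\ell_q}$; beyond this, the argument essentially transcribes the pointwise $\mathfrak c$-spaceability proof of Theorem \ref{th-gen}.
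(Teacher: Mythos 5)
Your proposal is correct, and its overall architecture --- mother operator $T$, disjointly supported copies $T_k$ from Proposition \ref{prop_pw-mothervector}, the injective map $\Psi$, and the coordinate-trace argument showing that $\lim_k\alpha_1^{(k)}$ (resp.\ $\lim_k\alpha_l^{(k)}$) exists and bounds $\|Sx_n\|_{\ell_q}$ from below --- is exactly the paper's. The genuine difference is the selection of $\mathbb{N}_1$. You insist on $\|(Tx_n)|_{\mathbb{N}_1}\|_{\ell_q}\geq\delta/2$ for \emph{all} $n$ and invoke a Bessaga--Pe\l czy\'nski/gliding-hump extraction for $1<q<\infty$ and coordinatewise weak convergence for $q=\infty$. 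The paper gets by with much less: failing to be Dunford--Pettis only requires $\|Sx_n-Sx\|_{\ell_q}\not\to 0$, so a lower bound along a subsequence suffices, and that follows from the same two-set pigeonhole underlying Lemma \ref{lemma-norm-infty} --- for any splitting $\mathbb{N}=P\cup Q$ into infinite sets, one of the two parts satisfies $\|(Tx_n-Tx)|_P\|_{\ell_q}\geq 2^{-1/q}\|Tx_n-Tx\|_{\ell_q}$ for infinitely many $n$. This avoids basis theory entirely and treats all $q$ (including $q=\infty$) uniformly. Your route does work, but leaves one loose end to patch: the union $\bigcup_n B_n$ of the blocks (or the set $\{k_n\}$ when $q=\infty$) may have finite complement in $\mathbb{N}$, whereas the construction needs $\mathbb{N}\setminus\mathbb{N}_1$ infinite so that it can be decomposed into the infinitely many infinite sets $\mathbb{N}_2,\mathbb{N}_3,\dots$; discarding every other block fixes this. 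Your remark that the case $q=1$ is vacuous (every bounded operator into $\ell_1$ is completely continuous by the Schur property) is a nice observation the paper does not make explicit.
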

\begin{proof}
To shorten notation we write $A := \prod_{(r,s)}(E;\ell_q)$ and $DP := DP(E,\ell_q)$. We fix a non-trivial operator $T$ in $A \setminus DP$ and take a sequence $\left( x_n \right)_n$ be such that it weakly converges to $x\in E$ but $Tx_n$ does not converge to $Tx$ in $\ell_q$. Also let $x_0\in E$ and $j_0\in \mathbb{N}$ such that $Tx_0(j_0)\neq 0$, that is, the $j_0$-th coordinate of the sequence $Tx_0 \in \ell_q$ is non-zero. Then we can choose $\epsilon_0>0$ and $\mathbb{N}_1 \subset \mathbb{N}$ with $j_0 \in \mathbb{N}_1,\ \text{card\,} \mathbb{N}_1 = \text{card} (\mathbb{N} \setminus \mathbb{N}_1)= \aleph_0$ and such that
\begin{equation}\label{eq_DP_infinito}
\left( \sum_{l\in\mathbb{N}_1} |Tx_n(l)-Tx(l)|^q \right)^{\frac1q} \geq \epsilon_0,
\end{equation}
for some $n$ large enough.

We proceed as in the proof of Theorem \ref{th-gen}, thus we omit some details. Proposition \ref{prop_pw-mothervector} is used with $\mathcal{M} = \Pi_{(r,s)}$. Fixed a countably disjoint decomposition $\mathbb{N} \setminus \mathbb{N}_1 = \bigcup_{j\geq2} \mathbb{N}_j$, the sequence of operators $(T_k)_{k\in\mathbb{N}}$ from Proposition \ref{prop_pw-mothervector} (i) fulfills $\left\|T_{k}\right\|_A = \|T\|_A$ and $T_k \in A \setminus DP$ for all $k$. Since $A$ is Banach, in Proposition \ref{prop_pw-mothervector} (ii) we have $p=1$ and the map constructed, $\Psi :\ell_1 \to A$, is such that $\Psi (\ell_p) \subset A$ is a $\fc$-dimensional space containing $T$. We just need to prove that $\Psi_a \notin DP$ for all non-zero $a=(\alpha_j)_{j=1}^{\infty} \in \ell_1$.
\vspace*{3mm}

First, suppose that $\alpha_1= 0$. Since $\{T_k x \}_{k\in\mathbb{N}}$, are sequences in $\ell_q$ with supports pairwise disjoint for all $x$,
\begin{align*}
\sum_{l=1}^{\infty}\left\vert \Psi_a  x_{n}(l)-\Psi_a x(l)\right\vert
^q 
& =\sum_{l=1}^{\infty}\left\vert \sum_{j=2}^{\infty
}\alpha_{j}T_{j}x_{n}(l)-\alpha_{j}T_{j}x(l) \right\vert^{q}
& =\sum_{l=1}^{\infty}\left\vert \sum_{j=2}^{\infty
}\alpha_{j}T_{j}(x_{n}-x)(l) \right\vert^{q}.
\end{align*}
For each $l\in \mathbb{N}$, there are unique $t, s \in \mathbb{N}$ such that $l=n_t^{(s)}$, hence
\begin{align*}
\sum_{l=1}^{\infty}\left\vert \sum_{j=1}^{\infty
}\alpha_{j}T_{j}(x_{n}-x)(l) \right\vert^q
&=\sum_{s,t=1}^{\infty}\left| \sum_{j=1}^{\infty
}\alpha_{j}T_{j}(x_{n}-x)\left(n_t^{(s)}\right)\right|^q \\
&= \sum_{s,t=1}^{\infty}\left|\alpha_{s}T_{s}(x_{n}-x)\left(n_t^{(s)}\right)\right|^q\\
&= \sum_{s=1}^{\infty} \left|\alpha_s\right|^q \sum_{t=1}^{\infty}\left|T(x_n-x)(t)\right|^q\\
&\geq \|a\|_{\ell_q}^q \cdot \epsilon_0^q,
\end{align*}
where the last inequality follows by \eqref{eq_DP_infinito}. On the other hand, $\alpha_1\neq0$ implies
\begin{align*}
&\left\|\Psi_a  x_n-\Psi_a  x\right\|_{\ell_q}^q \\
&\geq \sum_{l\in\mathbb{N}_1} \left| \alpha_1 T(x_n-x)(l) + \sum_{j=2}^{\infty} \alpha_j T_j(x_n-x)(l)\right|^q \\
&=|\alpha_1|^q \cdot  \sum_{l\in\mathbb{N}_1}\left|T(x_n-x)(l)\right|^q \\
&\geq |\alpha_1|^q \cdot \epsilon_0^q.
\end{align*}
Again, the last inequality follows by \eqref{eq_DP_infinito}. This yields $\Psi_a \notin DP$ and, therefore, the pointwise $\fc$-lineability of $A\setminus DP$.

\vspace*{3mm}

Now we prove that $\overline{\Psi(\ell_1)} \subset \left(A \setminus DP \right) \cup \{0\}$. Let $S = \lim_k \Psi a^{(k)} \in \overline{\Psi (\ell_1)} \setminus \{0\}$, with 
$a^{(k)} = \left( \alpha_j^{(k)} \right)_j \in \ell_1$ for all $k \in \mathbb{N}$. It remains for us to verify that $S \notin DP$. Let us consider  $\epsilon_0>0$ and $ (x_n)_{n\in \mathbb{N}},\, x \in E$ as in \eqref{eq_DP_infinito}. By the same reasoning from the proof of Theorem \ref{th-gen}, there exists $\lim_k\alpha_1^{(k)}$ and first we suppose that this limit is non-zero. Thus we have
\begin{align*}
\|S(x_n-x)\|_{\ell_q}^q
&= \left\|\lim_{k\to \infty} \left[ \alpha_1^{(k)} T(x_n-x) + \sum_{j=2}^{\infty} \alpha_j^{(k)} T_j(x_n-x) \right] \right\|_{\ell_q}^q\\
&=\sum_{l\in\mathbb{N}} \lim_{k\to \infty} \left|\alpha_1^{(k)} T(x_n-x)(l) + \sum_{j=2}^{\infty} \alpha_j^{(k)} T_j(x_n-x)(l)\right|^q\\
&\geq \sum_{l\in\mathbb{N}_1} \lim_{k\to \infty} \left| \alpha_1^{(k)} T(x_n-x)(l) + \sum_{j=2}^{\infty} \alpha_j^{(k)} T_j(x_n-x)(l) \right|^q\\
&= \sum_{l\in\mathbb{N}_1}\lim_{k\to \infty}\left|\alpha_1^{(k)}T(x_n-x)(l)\right|^q\\
&= \lim_{k \to \infty} |\alpha_1^{(k)}|^q \cdot \sum_{l\in\mathbb{N}_1}\left|T(x_n-x)(l)\right|^q\\
&\geq \lim_{k \to \infty} |\alpha_1^{(k)}|^q \cdot \epsilon_0^q.
\end{align*}
The argument of the case $\lim_k\alpha_1^{(k)} = 0$ is similar. Hence $S$ fails the be a Dunford-Petis operator, and this concludes the proof.
\end{proof}

\section{Summing operators on general summable scalar families} \label{sec-tuples}

In this section we deal with operators taking values on $\ell_q(I)$, the space of $q$-summable scalar family indexed over a fixed non-empty set $I$, and $q\in (0,\infty)$. Recall that $\ell_q(I)$ is the vector space of all functions $f: I \to \mathbb{K}$ such that $\sum_{i \in I} |f(i)|^q < \infty$, with this sum defined by
\[
\sum_{i \in I}\vert f(i)\vert^q
:= \sup \left\{ \sum_{i\in F}\vert f(i)\vert^q: F \textrm{ is a finite subset of } I\right\},
\]
and also recall that $\ell_q(I)$ endowed with
$
\| f\|_q = \left(\sum_{i\in I}\vert f(i)\vert^q\right)^{1/q}
$
is a quasi-Banach space. For details about the space $\ell_q(I)$ and related results we refer to \cite{pietsch-book}.

In this setting we investigate the \emph{geometric} notion of $(\alpha,\beta)$-lineability, recently introduced by F\'avaro, Pellegrino and Tomaz in \cite{favaro}. More precisely, let $\alpha,\beta,\lambda$ be cardinal numbers and $V$ be a vector space, with $\dim V=\lambda$ and $\alpha<\beta\leq\lambda$. A set $A\subset V$ is \emph{$\left(\alpha, \beta\right)$-lineable} if it is $\alpha$-lineable and for every subspace $W_{\alpha}\subset V$ with $W_{\alpha}\subset A\cup\left\{0\right\}  $ and $\dim W_{\alpha}=\alpha$, there is a subspace $W_{\beta}\subset V$ with $\dim W_{\beta}=\beta$ and $W_{\alpha}\subset W_{\beta}\subset A\cup\left\{  0\right\}$. When $V$ is a topological vector space and the subspace $W_\beta$ can be chosen closed, $A$ is said to be \emph{$\left(\alpha, \beta\right)$-spaceable}. Observe that this encompasses the lineability notion (take $\alpha=0$) and it is clear that pointwise $\alpha$-lineability / spaceability implies $(1,\alpha)$-lineability / spaceability. Recent papers provided results in this fashion \cite{digo-anselmo-bbms,dfpr_racsam2020,FPP,pr_pointw}. Next, we obtain a result of this type.

\begin{proposition}
Let $q\in (0,\infty]$, $I$ be a non-empty set, and $E_i$ be infinite-dimensional Banach spaces with $\dim E_i < \text{card\,}I$ for $i=1,\dots,m$. Also consider $\br,\bs,\bt,\bu \in [1,+\infty)^m$ and $\Lambda \subset \Gamma \subset \mathbb{N}^m$ sets of indexes. Then
\[
\Pi_{(\br, \bs)}^{\Lambda}  \left( E_1,\ldots,E_m; \ell_q\left(I \right) \right) \setminus \Pi_{(\bt, \bu)}^{\Gamma} \left( E_1,\ldots,E_m; \ell_q\left(I \right) \right)
\]
is either empty or $(\alpha,\text{card } I)$-lineable for a cardinal $\alpha$ with $\alpha<\text{card\,} I$.
\end{proposition}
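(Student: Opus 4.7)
The plan is to adapt the disjoint-support construction behind Theorem \ref{th-gen} from a countable partition of $\mathbb{N}$ to a transfinite partition of $I$. Set $\alpha := \max_{1\le i\le m}\dim E_i$, which coincides with the Hamel dimension of $E_1\times\cdots\times E_m$ (infinite since each $E_i$ is infinite-dimensional, and strictly less than $\text{card\,}I$ by hypothesis). I will establish $(\alpha,\text{card\,}I)$-lineability of
\[
A\setminus B,\qquad A:=\Pi_{(\br,\bs)}^{\Lambda}(E_1,\ldots,E_m;\ell_q(I)),\qquad B:=\Pi_{(\bt,\bu)}^{\Gamma}(E_1,\ldots,E_m;\ell_q(I)).
\]
Assuming this set is nonempty, fix $T\in A\setminus B$.

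For an arbitrary $\alpha$-dimensional subspace $W_\alpha\subset A\cup\{0\}$ I first control the ``occupied'' portion of $I$:
\[
J\;:=\;\bigcup_{S\in\mathcal{B}_{W_\alpha}}\bigcup_{(e_1,\ldots,e_m)\in\mathcal{B}_{E_1}\times\cdots\times\mathcal{B}_{E_m}}\text{supp}\bigl(S(e_1,\ldots,e_m)\bigr),
\]
where $\mathcal{B}_{W_\alpha}$ and $\mathcal{B}_{E_k}$ are fixed Hamel bases. By multilinearity, every $Sx$ with $S\in W_\alpha$ and $x\in E_1\times\cdots\times E_m$ is supported in $J$. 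For $q<\infty$ each individual support is at most countable (nonzero coordinates of an $\ell_q(I)$-element), hence $|J|\le \aleph_0\cdot\alpha\cdot\alpha^m=\alpha<\text{card\,}I$ and $|I\setminus J|=\text{card\,}I$. Partition $I\setminus J=\bigsqcup_{j\in I}I_j$ into pairwise disjoint countably infinite subsets, and fix bijections $I_j\leftrightarrow\mathbb{N}$ inducing isometric inclusions $\iota_j:\ell_q\hookrightarrow\ell_q(I)$ with image $\ell_q(I_j)$. Define $T_j:=\iota_j\circ T$.

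By the multi-ideal property, $T_j\in A$; the canonical projection $P_j:\ell_q(I)\to\ell_q(I_j)\cong\ell_q$ satisfies $P_j\circ T_j=T\notin B$, so the ideal property forces $T_j\notin B$. Set $W:=W_\alpha+\text{span}\{T_j:j\in I\}$. The pairwise disjointness of ranges (supports in $I_j$, each disjoint from $J$) yields both the linear independence of $\{T_j:j\in I\}$ and $W_\alpha\cap\text{span}\{T_j\}=\{0\}$, so $\dim W=\text{card\,}I$. For any nonzero $R=S+\sum_{j\in F}c_j T_j\in W$ (with $F$ finite), $R\in A$ is immediate; if $S\ne 0$, the projection $P_J:\ell_q(I)\to\ell_q(J)$ gives $P_J\circ R=S\notin B$, so $R\notin B$ by the ideal property; if $S=0$ and some $c_{j_0}\ne 0$, then $P_{j_0}\circ R=c_{j_0}T\notin B$. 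In every case $R\in A\setminus B$, so $W\subset(A\setminus B)\cup\{0\}$, which is the required extension. The standalone $\alpha$-lineability follows by applying the construction to $W_\alpha=\{0\}$ and extracting any $\alpha$-dimensional subspace of the resulting $\text{card\,}I$-dimensional $W$.

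The main obstacle is the cardinal bookkeeping in $|J|<\text{card\,}I$: one must unfold multilinearity carefully and combine Hamel bases of \emph{each} factor $E_k$ together with that of $W_\alpha$, and then verify that the bound holds for every $Sx$ with $S\in W_\alpha$ (not just basis pairs). A related subtlety is the $q=\infty$ case, in which members of $\ell_\infty(I)$ may have uncountable support and the countable-support bound collapses; that case would require a different invariant (for instance, replacing the support of $Sx$ by a separable subrange of the image or by a closure argument).
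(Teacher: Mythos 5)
Your overall strategy coincides with the paper's: relocate $T$ into $\text{card\,}I$ pairwise disjoint blocks of $I$ chosen to avoid the small set of coordinates occupied by the given subspace $W_\alpha$, then use disjointness of supports to keep every nonzero combination outside $B$. Your bound on the occupied set $J$ via Hamel bases and multilinearity is correct and in fact slightly sharper than the paper's (which bounds by $\text{card\,}(E_1\times\cdots\times E_m)$ rather than by the dimension), and your use of the canonical projections $P_J$, $P_j$ together with the ideal property is a clean substitute for the paper's norm equalities. You are also right to flag $q=\infty$: the countable-support bound fails there, and the paper's own argument silently assumes $q<\infty$ as well.

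There is, however, a genuine error in the construction of the $T_j$. You partition $I\setminus J$ into \emph{countably infinite} blocks $I_j$ and set $T_j:=\iota_j\circ T$ with $\iota_j:\ell_q\hookrightarrow\ell_q(I)$. This composition is ill-typed: $T$ takes values in $\ell_q(I)$, not in $\ell_q=\ell_q(\mathbb{N})$, and there is no fixed countable subset of $I$ supporting all values $Tx$ simultaneously. The essential range index set $\bigcup_{x}\mathrm{supp}(Tx)$ is controlled only by $\aleph_0\cdot\bigl(\max_i\dim E_i\bigr)^m$, which is uncountable (the Hamel dimension of an infinite-dimensional Banach space is at least $\mathfrak c$), so such a $T$ cannot in general be relocated into a countable block. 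The repair is exactly what the paper does: take each block $I_j$ of cardinality $\text{card\,}I$ (possible since $\text{card\,}(I\setminus J)=\text{card\,}I$ and $\text{card\,}I\cdot\text{card\,}I=\text{card\,}I$) and let $\iota_j$ be the isometry $\ell_q(I)\to\ell_q(I_j)\subset\ell_q(I)$ induced by a bijection $I\to I_j$; with that change, and restricting to $q<\infty$, the rest of your argument goes through. A final nit: the definition of $(\alpha,\beta)$-lineability requires the given subspace to satisfy $W_\alpha\subset(A\setminus B)\cup\{0\}$, not merely $W_\alpha\subset A\cup\{0\}$ as you wrote; your step $P_J\circ R=S\notin B$ relies on this.
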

\begin{proof}
We will write $A=\Pi_{(\br, \bs)}^{\Lambda}  \left( E_1,\ldots,E_m; \ell_q\left(I \right) \right)$ and $B=\Pi_{(\bt, \bu)}^{\Gamma} \left( E_1,\ldots,E_m; \ell_q\left(I \right) \right)$ to simplify the notation. Suppose there is $T \in A\setminus B$ and write $I=\bigcup_{j\in I} I_j$ as a pairwise disjoint union, with $\text{card } I_j = \text{card } I$ for all $j\in I$. For each $j\in I$, we denote $I_j := \left\{\eta_i^{(j)}:i\in I\right\}$, and thus $I = \left\{\eta_i^{(j)}:i,j\in I\right\}$. Then for any $\nu \in I$, there are unique $i,j\in I$ such that $\nu = \eta_i^{(j)}$. Similarly to the construction in Proposition \ref{prop_pw-mothervector} (i), for all $\nu \in I$ we define $T_\nu : E_1 \times\cdots\times E_m \to \ell_q(I)$ by
\begin{equation*}
T_\nu x\left(\eta_i^{(j)}\right) = 
\begin{cases}
0, & \mbox{if $j\neq \nu$}\\
Tx(i), & \mbox{if $j=\nu$}.
\end{cases} 
\end{equation*}
Thus $\|T_\nu\|_A = \|T\|_A$ and this implies $T_\nu \in A\setminus B$ for all $\nu \in I$. Moreover, $\{T_\nu : \nu \in I\}$ is linearly independent and
$
X := \text{span\,} \{T_\nu : \nu \in I\} \subset \left( A \setminus B\right) \cup \{0\}.
$
Notice that, since $\dim X = \text{card\,} I$, this yields the $\text{card\,}I$-lineability of $A\setminus B$.

Now we turn to proving that $A\setminus B$ is $(\alpha,\text{card\,} I)$-lineable for all $\alpha<\text{card } I$. Let $\mathcal{V} \subset A\setminus B \cup \{0\}$ be an arbitrary $\alpha$-dimensional subspace. Consider
\[
\mathcal{I} := 
\{Vx : V\in \mathcal{V} \text{ and } x\in E_1\times\cdots\times E_m\}
\subset \ell_q(I).
\]
and also denote by $\Delta$ the set of indexes formed by all non-zero coordinates of each scalar family $Vx \in \mathcal{I} \subset \ell_q(I)$. Note that the number of such coordinates is not bigger than
\[
\beta := \text{card\,} (E_1 \times\cdots\times E_m) \cdot \aleph_0 \cdot \text{card\,} \mathcal{V} <\text{card\,} I.
\]

Consider a new disjoint decomposition of $I$, 
\[
I=\left(\bigcup_{j\in I}\widetilde{I}_j\right)\cup \Delta,
\]
with $\text{card\,} \widetilde{I}_j = \text{card\,} I$ for all $j\in I$. As before, we have to construct operators $F_\nu : E_1\times\cdots\times E_m \to \ell_q(I)$ such that the support of the scalar family $F_\nu x \in \ell_q(I)$ lies in $\widetilde{I}_\nu$, and also
$
\|F_\nu x\|_{\ell_p(I)} =\|T_\nu x \|_{\ell_q(I)},
$
for all $x\in E_1\times\cdots\times E_m$. Hence $F_\nu$ belongs to $A\setminus B$ for all $\nu \in I$ and the subspace
\[
\mathcal{W} := \text{span\,}\{F_\nu,V: \nu \in I \text{ and } V \in \mathcal{V}\}
\]
fulfills $\mathcal{V}\subset \mathcal{W}\subset (A\setminus B)\cup \{0\}$ and $\dim \mathcal{W}=\text{card\,} I$.
\end{proof}

\vspace*{5mm}

\end{document}